\documentclass[reqno]{amsart}
\usepackage[english]{babel}
\usepackage{graphicx,subfigure}
\usepackage{amsmath,amssymb}
\usepackage{appendix}
\usepackage{color}
\definecolor{oneblue}{rgb}{0,0.0,0.75}

\usepackage{fancyhdr}

\makeatletter
\newcommand{\sech}{\mathop{\operator@font sech}}
\newcommand{\sign}{\mathop{\operator@font sign}}
\makeatother

\newtheorem{theorem}{Theorem}[section]
\newtheorem{proposition}{Proposition}[section]

\newtheorem{remark}{Remark}[section]
\numberwithin{equation}{section}

\begin{document}

\title[]{The numerical solution of 2D Boussinesq/Boussinesq models for internal waves with spectral methods}


\author[A. Duran]{Angel Duran}
\address{ Applied Mathematics Department,  University of
Valladolid, 47011 Valladolid, Spain}
\email{angeldm@uva.es}


\subjclass[2010]{65M70 (primary), 76B15, 76B25 (secondary)}
\keywords{Internal waves, Boussinesq Boussinesq systems, spectral methods, error estimates}


\begin{abstract}
The numerical approximation of some Boussinesq systems in two spatial dimensions is here considered. The differential systems under study are proposed as asymptotic models for the propagation of waves along the interface of two layers of fluids with different densities and subjected to a Boussinesq physical regime in each layer. Well-posedness of the periodic initial-value problem (ivp) of the systems is first analized. Then, a discretization in space based on the spectral Fourier-Galerkin method is introduced and error estimates for the semidiscrete approximation are derived. Using an efficient time integrator, some numerical experiments to illustrate the performance of the discretization are presented.
\end{abstract}

\maketitle

\section{Introduction}
The present paper is concerned with the numerical approximation of the family of PDEs
\begin{eqnarray}
(1-\mu b\Delta)\zeta_{t}+r_{1}\nabla\cdot{\bf v}_{\beta}+\epsilon \lambda\nabla\cdot \left(\zeta{\bf v}_{\beta}\right)+\mu a\nabla\cdot\Delta {\bf v}_{\beta}=0,\nonumber&&\\
(1-\mu d\Delta)({\bf v}_{\beta})_{t}+r_{2}\nabla\zeta+\frac{\epsilon}{2}\lambda\nabla |{\bf v}_{\beta}|^{2}+\mu(1-\gamma)c\Delta\nabla\zeta=0.&&\;\;\label{BB1}
\end{eqnarray}
The system (\ref{BB1}) was introduced in \cite{BLS2008} as a model (in nondimensional formulation) for the wave propagation along the interface of a two-layer system of inviscid, homogeneous fluids under the following conditions. Let $d_{j}, \rho_{j}, j=1,2$, be, respectively, the depth and density of the $j$th layer, with $\rho_{2}>\rho_{1}$. These parameters define the corresponding density and depth ratios
$$\gamma=\frac{\rho_{1}}{\rho_{2}}<1, \delta=\frac{d_{1}}{d_{2}},$$ with
$$\lambda=\frac{\delta^{2}-\gamma}{(\delta+\gamma)^{2}}, r_{1}=\frac{1}{\gamma+\delta}, r_{2}=(1-\gamma),$$ in (\ref{BB1}). A rigid-lid condition is assumed for layer 1 at the top $z=0$, while the bottom of layer 2 (at $z=-(d_{1}+d_{2})$) is assumed to be rigid as well and flat. The deviation of the interface at position $(x,y)$ from the rest and time $t$ is given in (\ref{BB1}) by $\zeta=\zeta(x,y,t)$. The velocity of the wave is represented by some variable $v$, with $v_{\beta }=(I-\beta \Delta )^{-1}v$, being $\Delta$ the Laplace operator and $\beta\geq 0$ a modelling parameter. The system (\ref{BB1}) is derived in \cite{BLS2008} under the so-called Boussinesq/Boussinesq (B/B) regime, meaning that the flow is under the Boussinesq regime in both fluid domains, and is represented by the conditions
\begin{eqnarray*}
 \mu\sim\mu_{2}\sim\epsilon\sim\epsilon_{2}<<1,\label{BB21bb}
\end{eqnarray*}
with $\delta\approx 1$ and where
\begin{equation*}
\epsilon=\frac{a}{d_{1}},\; \mu=\frac{d_{1}}{\lambda^{2}},\; \epsilon_{2}=\frac{a}{d_{2}},\; \mu_{2}=\frac{d_{2}}{\lambda^{2}},\label{param}
\end{equation*}
being $a$ and $\lambda$, respectively,  amplitude and wavelength of the wave; thus $\mu, \epsilon$ measure respectively dispersive and nonli ear effects with respect to layer 1, with similar meaning for $\mu_{2},\epsilon_{2}$ with respect to layer 2.

The formulation of (\ref{BB1}) in \cite{BLS2008} defines four parameters $a,b,c,d$, given by
\begin{eqnarray*}
a&=&\frac{(1-\alpha_{1})(1+\gamma\delta)-3\delta\beta (\delta+\gamma)}{3\delta (\gamma+\delta)^{2}},\quad b=\alpha_{1}\frac{1+\gamma\delta}{3\delta(\gamma+\delta)},\nonumber\\
c&=&\beta\alpha_{2},\quad d=\beta(1-\alpha_{2}),\label{BB1b}
\end{eqnarray*}
where $\alpha_{1}\geq 0, \alpha_{2}\leq 1$ are additional modelling parameters. (They are introduced from a \lq BBM trick\rq\ in order to get some symmetry in the contribution of the third-order derivatives in the equations, which is convenient for well-posedness, \cite{S}.)
For the derivation of B/B and other models for internal waves, for both free-surface and rigid-lid conditions, we refer to \cite{Duchene2021,Lannes2} and references therein. 

As proved in \cite{BLS2008}, the consistency of the corresponding Euler equations for internal waves with (\ref{BB1}) has a precision $O(\epsilon^{3/2})$, uniformly, under certain conditions, with respect to $\epsilon\in [0,1), \mu\in [0,1]$. Linear well-posedness of the initial-value problem (ivp) holds when $a,c\leq 0, b, d\geq 0$ from the dispersion relation
\begin{eqnarray}
\omega^{2}&=&|{\bf k}|^{2}\alpha(k)^{2},\nonumber\\
\alpha(k)^{2}&=&\frac{\left(\frac{1}{\gamma+\delta}-\mu a|{\bf k}|^{2}\right)(1-\gamma)(1-\mu c|{\bf k}|^{2})}{(1+\mu b|{\bf k}|^{2})(1+\mu d|{\bf k}|^{2})}.\label{BB1c}
\end{eqnarray}
On the other hand, nonlinear well-posedness in suitable Sobolev spaces is analyzed in \cite{A}. The study is divided into two groups of systems in (\ref{BB1}): weakly dispersive (for which $b>0$ and $d>0$) and other admissible systems (where $b=0$ or $d=0$), and with $a,c\leq 0$ in both cases. The Sobolev space where the correspnding ivp is well posed depends on the order of the term $\alpha(k)$ in (\ref{BB1c}) and the proofs make use of contraction-mapping arguments and energy methods, respectively; see the discussions in \cite{BonaChS2004,DDS1} for Boussinesq surface and internal wave models in the one-dimensional case.

To the best of our knowledge, the numerical approximation of (\ref{BB1}) is considered for the surface-wave case, corresponding to (\ref{BB1}) in the limit $\gamma=0, \delta=1$, in several papers for one and two dimensions, \cite{DMS2007,DMS3,AD2,ADM1,XRAA}, and for the internal wave propagation in 1D in \cite{XRAA,DDS0}. Focused on the 2D case, in \cite{DMS2007} several $L^{2}$ and $H^{1}$ error estimates of Galerkin-Finite Elements semidiscretizations for the initial-boundary-value-problem (ibvp) with homogeneous Dirichlet boundary conditions of Bona-Smith systems ($b=d>0, a=0, c<0$) for surface waves are derived. The analysis can be adapted to the case of homogeneous Neumann boundary conditions for the surface wave elevation and homogeneous Dirichlet conditions for the velocity variables (called reflective boundary conditions, see also \cite{DMS3}), and some indications on the derivation of $L^{2}$ error estimates for the periodic problem and discretizations with smooth, periodic splines on uniform meshes, are provided.

In this sense, the use of spectral methods to approximate surface- and internal-wave Boussinesq systems seems to be considered only in 1D. The main purpose of the paper is then to study the numerical approximation of the periodic ivp for (\ref{BB1}) with Fourier spectral discretizations in space. This is developed according to the following highlights and structure:
\begin{itemize}
\item The well-posedness study of the ivp of (\ref{BB1}), made in \cite{A}, is first extended to the periodic ivp in section \ref{sec2}.
\item A Fourier-Galerkin spectral semidiscretization in space for the periodic ivp of (\ref{BB1})  is introduced in section \ref{sec3}. Several properties of the resulting semidiscrete system are proved and error estimates for several well-posed B/B systems are derived. They extend the convergence results obtained in \cite{DDS0} for the one-dimensional case. These estimates also cover the approximation of the surface wave case ($\gamma=0, \delta=1$) with Fourier-Galerkin spectral methods. The use of spectral approximations for other types of ibvp's by using Jacobi polynomials, and extending the approach in 1D made in \cite{Duran2023}, is analyzed elsewhere, \cite{Duran2026_2}.
\item We complete the study with the introduction of an efficient full discretization and the illustration of the convergence results with some numerical experiments in section \ref{sec4}.
\end{itemize}
The following notation will be used throughout the paper. For a domain $\Omega\subset \mathbb{R}^{2}$ and $s\in\mathbb{R}$, let $H^{s}=H^{s}(\Omega)$ be the $L^{2}$-based Sobolev space on $\Omega$ of order $s$ (with $L^{2}=H^{0}$) and norm denoted by $||\cdot||_{s}$, being $||\cdot||$ and $(\cdot,\cdot)$ the norm and inner product in $L^{2}$, respectively, and $|\cdot |_{\infty}$ the norm in $L^{\infty}=L^{\infty}(\Omega)$. For $T>0$, $L^{\infty}(0,T,H^{s})$ will denote the normed space of functions $u:[0,T]\rightarrow H^{s}$ with norm
$$||u||_{L^{\infty}(0,T,H^{s})}={\rm ess sup}_{t\in [0,T]}||u(t)||_{s},$$ and ${\rm ess sup}$ as the essential supremum. Similarly, for an integer $k\geq 0$, $C^{k}(0,T,H^{s})$ will denote the space of $k$th-order continuously differentiable functions $u:[0,T]\rightarrow H^{s}$. Finally, the symbol $f\lesssim g$ will denote the existence of some constant $C$ such that $f\leq C g$. In what follows the constant $C$ will be independent of the discretization parameters, but will generally depend on the final time $T$ of approximation and the norm of the exact solution and its spatial derivatives.
\section{Well-posedness of the periodic ivp}
\label{sec2}
Let $T>0$. In this section we consider the periodic ivp (period one, for simplicity) of (\ref{BB1}), written in the form
\begin{eqnarray}
(1-b\Delta)\zeta_{t}+r_{1}\nabla\cdot{\bf v}_{\beta}+\lambda\nabla\cdot \left(\zeta{\bf v}\right)+a\nabla\cdot\Delta {\bf v}=0,\nonumber&&\\
(1-d\Delta)({\bf v})_{t}+r_{2}\nabla\zeta+\frac{\lambda}{2}\nabla |{\bf v}|^{2}+c'\Delta\nabla\zeta=0,&&\;\;\label{BB21}
\end{eqnarray}
for $1$-periodic, real functions $\zeta=\zeta(x,y,t), {\bf v}=(v_{1}(x,y,t),v_{2}(x,y,t))$, $0\leq t\leq T$, $(x,y)\in\Omega=(0,1)^{2}$, $c'=(1-\gamma)c$. The initial conditions
\begin{eqnarray}
\zeta(x,y,0)=\zeta_{0}(x,y),\quad {\bf v}(x,y,0)=v^{0}(x,y)=(v_{1}^{0}(x,y),v_{2}^{0}(x,y)),\label{BB22}
\end{eqnarray}
are $1$-periodic, real functions. Well-posedness of (\ref{BB21}), (\ref{BB22}) will be analyzed here by checking the arguments used in \cite{A} for the ivp to extend the corresponding results. The first case under study concerns the weakly dispersive B/B systems, see also \cite{DMS2007}.
\begin{theorem}
\label{theo21}
Let $s>0$.
\begin{itemize}
\item[(i)] Assume $b,d>0, a,c<0$ or $b,d>0, a=c\leq 0$. Let $(\zeta_{0},v^{0})\in (H^{s})^{3}$. Then there are some $T>0$ and a unique solution $(\zeta,v)\in X_{T}=C(0,T,H^{s})^{3}$ of (\ref{BB21}), (\ref{BB22}).
\item[(ii)] Assume $b,d>0, a=0,c<0$. Let $(\zeta_{0},v^{0})\in H^{s+1}\times (H^{s})^{2}$. Then there are some $T>0$ and a unique solution $(\zeta,v)\in X_{T}=C(0,T,H^{s+1})\times C(0,T,H^{s})^{2}$ of (\ref{BB21}), (\ref{BB22}).
\item[(iii)] Assume $b,d>0, a<0,c=0$. Let $(\zeta_{0},v^{0})\in H^{s-1}\times (H^{s})^{2}$. Then there are some $T>0$ and a unique solution $(\zeta,v)\in X_{T}=C(0,T,H^{s-1})\times C(0,T,H^{s})^{2}$ of (\ref{BB21}), (\ref{BB22}).
\end{itemize}
\end{theorem}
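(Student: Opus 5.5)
The plan is to rewrite (\ref{BB21}) as a system of nonlocal integral equations and solve it by the contraction mapping principle in $X_{T}$, following \cite{A} but replacing Fourier transforms on $\mathbb{R}^{2}$ by Fourier series on the torus $\Omega=(0,1)^{2}$. Since $b,d>0$, the operators $1-b\Delta$ and $1-d\Delta$ are invertible on periodic distributions, with symbols $1+4\pi^{2}b|k|^{2}$ and $1+4\pi^{2}d|k|^{2}$, $k\in\mathbb{Z}^{2}$. Applying these inverses gives
\begin{eqnarray*}
\zeta_{t}+A\,{\bf v}=-\lambda(1-b\Delta)^{-1}\nabla\cdot(\zeta{\bf v}),\quad {\bf v}_{t}+B\,\zeta=-\frac{\lambda}{2}(1-d\Delta)^{-1}\nabla|{\bf v}|^{2},
\end{eqnarray*}
where $A=(1-b\Delta)^{-1}(r_{1}\nabla\cdot(1-\beta\Delta)^{-1}+a\nabla\cdot\Delta)$ and $B=(1-d\Delta)^{-1}(r_{2}\nabla+c'\Delta\nabla)$ are Fourier multipliers. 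The linear part $\partial_{t}+\begin{pmatrix}0&A\\ B&0\end{pmatrix}$ generates a group $S(t)$. I would diagonalize it mode by mode. On each $k$ the matrix has eigenvalues $\pm i|k|\alpha(k)$, with $\alpha(k)^{2}$ as in (\ref{BB1c}) up to the $2\pi$ scaling. This is real and nonnegative precisely under the sign conditions $a,c\leq0$, $b,d>0$. The curl-free/divergence-free splitting of ${\bf v}$ decouples the transversal part, which is simply constant in time.

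The key step is to identify, in each case, the space in which $S(t)$ is uniformly bounded, using the order of the symbol of $A$ and $B$ at high frequency.

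In case (i), the multipliers have orders $\mathrm{ord}(A)=1$ (or $-1$ when $a=0$) and $\mathrm{ord}(B)=1$ (or $-1$ when $c=0$). With $a,c<0$ the products $AB$ have order $2$ and are balanced, so $S(t)$ is a bounded group on $(H^{s})^{3}$. The same holds for $a=c$, including $a=c=0$, where both orders equal $-1$ and $S(t)$ is even bounded on $(H^{s})^{3}$ with $|\alpha|$ bounded.

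In case (ii) ($a=0$, $c<0$) the orders are $-1$ for $A$ and $+1$ for $B$. The natural symmetrizer weights $\zeta$ by $\langle k\rangle$, which gives a bounded group on $H^{s+1}\times(H^{s})^{2}$. Case (iii) is symmetric and gives $H^{s-1}\times(H^{s})^{2}$.

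Concretely, I would compute the eigenvectors for each $k$ and check that their condition numbers, with the chosen weights, are uniformly bounded in $k$. That check yields $\|S(t)U\|_{X}\lesssim\|U\|_{X}$ for $|t|\leq T$.

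Next, the Duhamel formulation $U(t)=S(t)U_{0}-\int_{0}^{t}S(t-\tau)N(U(\tau))\,d\tau$ is set up, and I need bounds on the nonlinearity $N$ in the relevant space.

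The main obstacle is the regime of small $s>0$, where $H^{s}(\Omega)$ is not an algebra. Here I would exploit the smoothing of $(1-b\Delta)^{-1}\nabla$ and $(1-d\Delta)^{-1}\nabla$, which gain one derivative. Periodic product estimates ($\|fg\|_{s-1}\lesssim\|f\|_{s}\|g\|_{s}$ for $s>0$ in 2D, proved on $\mathbb{Z}^{2}$ by the same Fourier-convolution arguments as in $\mathbb{R}^{2}$, with sums in place of integrals) then give
\[
\|N(U)-N(W)\|_{X}\lesssim(\|U\|_{X}+\|W\|_{X})\|U-W\|_{X}.
\]
This estimate has to be checked in each case's mixed space. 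In case (iii) $\zeta$ lies only in $H^{s-1}$, so the product $\zeta{\bf v}$ must be estimated in $H^{s-2}$ with ${\bf v}\in H^{s}$. The gain of two derivatives from $(1-b\Delta)^{-1}\nabla$ combined with the $a\nabla\cdot\Delta$ balancing has to be tracked carefully, and this is where I would use the multiplier structure rather than crude bounds. If $s$ is too small for this product estimate, I would verify the threshold against \cite{A}.

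Once $N$ is locally Lipschitz, the Duhamel map is a contraction on a ball of $X_{T}$ for $T\sim(C\|U_{0}\|_{X})^{-1}$. This gives existence and uniqueness in $X_{T}$, and continuity in time follows from the strong continuity of $S(t)$.
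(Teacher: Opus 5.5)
Your strategy is the paper's: diagonalize the linear part mode by mode on $\mathbb{Z}^{2}$ and choose weights from the order of the eigenvector parameter $\alpha({\bf k})$. The paper's change of variables $P({\bf k})^{-1}$ to $(\eta,w_{1},w_{2})\in(H^{s})^{3}$ is exactly your statement that $S(t)$ is bounded on the weighted space. You then close a contraction with $\|\partial_{x_k}(I-r\Delta)^{-1}(fg)\|_{s}\lesssim\|fg\|_{s-1}\lesssim\|f\|_{s}\|g\|_{s}$ for $s>0$. This works for (i), the only case the paper carries out, and for (ii), where the extra bound $\|\zeta{\bf v}\|_{s}\lesssim\|\zeta\|_{s+1}\|{\bf v}\|_{s}$ holds for $s>0$ in two dimensions.

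One inconsistency: you keep $(1-\beta\Delta)^{-1}$ inside $A$ but count $\mathrm{ord}(A)=-1$ when $a=0$. For $\beta>0$ that order is $-3$, and the case $a=c=0$ would no longer be balanced on $(H^{s})^{3}$. Drop that factor, as the paper's symbol $\mathcal{A}({\bf k})$ does.

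The genuine gap is (iii). There the Duhamel map must send $H^{s-1}\times(H^{s})^{2}$ into itself, so you need
\[
\|(1-b\Delta)^{-1}\nabla\cdot(\zeta{\bf v})\|_{s-1}\lesssim\|\zeta\|_{s-1}\|{\bf v}\|_{s}.
\]
The multiplier $(1-b\Delta)^{-1}\nabla\cdot$ gains exactly one derivative, not two. The dispersive term $a\nabla\cdot\Delta{\bf v}$ plays no role in this bound: its only effect is the weight $1/\alpha({\bf k})$, which is what put $\zeta$ in $H^{s-1}$ in the first place. So there is no further multiplier structure to exploit, and the bound is false for $0<s<1/2$. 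Take $\zeta=N^{1-s}\cos 2\pi(N+1)x$ and ${\bf v}=(\cos 2\pi Nx,0)$. Then $\|\zeta\|_{s-1}\sim 1$ and $\|{\bf v}\|_{s}\sim N^{s}$. But $\zeta v_{1}$ contains the low mode $\frac{1}{2}N^{1-s}\cos 2\pi x$, so the left-hand side is $\gtrsim N^{1-s}$, and the ratio $N^{1-2s}$ is unbounded.

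Hence your contraction argument proves (iii) only for $s\geq 1/2$. There $(s-1)+s\geq 0$ and $(s-1)+s-(s-2)=s+1>1$, so $\|\zeta{\bf v}\|_{s-2}\lesssim\|\zeta\|_{s-1}\|{\bf v}\|_{s}$ is valid in two dimensions. The paper does not resolve this either: it proves only (i) and asserts the other cases follow the same steps. So you are not missing an idea contained in the paper. Still, ``verifying the threshold against \cite{A}'' is precisely where the method forces a restriction on $s$ that the statement, as written, omits.
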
 
\begin{proof}
The contraction-mapping arguments used in \cite{A,DMS2007} can be adapted here from the corresponding Fourier representation (with Fourier series instead of Fourier transform). We will follow the same steps and prove, by way of illustration, the case (i). We write (\ref{BB21}) as
\begin{eqnarray}
\zeta_{t}+(I-b\Delta)^{-1}\left(r_{1}\nabla\cdot{\bf v}_{\beta}+\lambda\nabla\cdot \left(\zeta{\bf v}\right)+a\nabla\cdot\Delta {\bf v}\right)=0,\nonumber&&\\
({\bf v})_{t}+(I-d\Delta)^{-1}\left(r_{2}\nabla\zeta+\frac{\lambda}{2}\nabla |{\bf v}|^{2}+c'\Delta\nabla\zeta=0\right).&&\;\;\label{BB21b}
\end{eqnarray}
For ${\bf k}=(k_{x},k_{y})\in\mathbb{Z}^{2}$, let $\widehat{\zeta}({\bf k},t), \widehat{v_{1}}({\bf k},t), \widehat{v_{2}}({\bf k},t)$ be the ${\bf k}$th Fourier coefficient of $\zeta,v_{1},v_{2}$, respectively. The representation of (\ref{BB21b}) in the Fourier space is given by
\begin{eqnarray}
\frac{d}{dt}\begin{pmatrix}\widehat{\zeta}\\\widehat{v_{1}}\\\widehat{v_{2}}\end{pmatrix}({\bf k},t)+i|{\bf k}|\mathcal{A}({\bf k})\begin{pmatrix}\widehat{\zeta}\\\widehat{v_{1}}\\\widehat{v_{2}}\end{pmatrix}({\bf k},t)+iF(\widehat{\zeta},\widehat{v_{1}},\widehat{v_{2}})=0,\label{BB23}
\end{eqnarray}
where $|{\bf k}|=\sqrt{k_{x}^{2}+k_{y}^{2}}$ and
\begin{eqnarray}
\mathcal{A}({\bf k})&=&\begin{pmatrix}0&\frac{k_{x}}{|{\bf k}|}\frac{(r_{1}-a|{\bf k}|^{2})}{1+b|{\bf k}|^{2}}&\frac{k_{y}}{|{\bf k}|}\frac{(r_{1}-a|{\bf k}|^{2})}{1+b|{\bf k}|^{2}}\\
\frac{k_{x}}{|{\bf k}|}\frac{(r_{2}-c'|{\bf k}|^{2})}{1+d|{\bf k}|^{2}}&0&0\\
\frac{k_{y}}{|{\bf k}|}\frac{(r_{2}-c'|{\bf k}|^{2})}{1+d|{\bf k}|^{2}}&0&0\end{pmatrix},\nonumber\\
F&=&\begin{pmatrix}\lambda\frac{k_{x}\widehat{(\zeta v_{1})}({\bf k},t)+k_{y}\widehat{(\zeta v_{2})}({\bf k},t)}{1+b|{\bf k}|^{2}}\\
\frac{\lambda}{2}\frac{k_{x}}{1+d|{\bf k}|^{2}}\widehat{|v|^{2}}({\bf k},t)\\
\frac{\lambda}{2}\frac{k_{y}}{1+d|{\bf k}|^{2}}\widehat{|v|^{2}}({\bf k},t)\end{pmatrix}.\label{BB24}
\end{eqnarray}
The eigenvalues of $\mathcal{A}({\bf k})$ are $\{0,\pm\sigma({\bf k})\}$, where
\begin{eqnarray*}
\sigma({\bf k})=\left(\frac{(r_{1}-a|{\bf k}|^{2})(r_{2}-c'|{\bf k}|^{2})}{(1+b|{\bf k}|^{2})(1+d|{\bf k}|^{2})}\right)^{1/2}=\sqrt{r}\left(\frac{(1-a'|{\bf k}|^{2})(1-c|{\bf k}|^{2})}{(1+b|{\bf k}|^{2})(1+d|{\bf k}|^{2})}\right)^{1/2},
\end{eqnarray*}
with $r=r_{1}r_{2}=\frac{1-\gamma}{\delta+\gamma}>0, a'=(\delta+\gamma)a$. We diagonalize the system (\ref{BB23}), (\ref{BB24}) from
\begin{eqnarray*}
P({\bf k})=\begin{pmatrix}0&\alpha({\bf k})&-\alpha({\bf k})\\-\frac{k_{y}}{|{\bf k}|}&\frac{k_{x}}{|{\bf k}|}&\frac{k_{x}}{|{\bf k}|}\\\frac{k_{x}}{|{\bf k}|}&\frac{k_{y}}{|{\bf k}|}&\frac{k_{y}}{|{\bf k}|}\end{pmatrix},\; 
\alpha({\bf k})=\sqrt{\frac{r_{1}}{r_{2}}}\left(\frac{(1+d|{\bf k}|^{2})(1-a'|{\bf k}|^{2})}{(1+b|{\bf k}|^{2})(1-c|{\bf k}|^{2})}\right)^{1/2}.
\end{eqnarray*}
The change of variables, cf. \cite{DMS2007}
\begin{eqnarray*}
\begin{pmatrix}\widehat{\eta}\\\widehat{w_{1}}\\\widehat{w_{2}}\end{pmatrix}=P^{-1}({\bf k})\begin{pmatrix}\widehat{\zeta}\\\widehat{v_{1}}\\\widehat{v_{2}}\end{pmatrix},
\end{eqnarray*}
that is
\begin{eqnarray}
\widehat{\eta}({\bf k},t)&=&-\frac{k_{y}}{|{\bf k}|}\widehat{v_{1}}({\bf k},t)+\frac{k_{x}}{|{\bf k}|}\widehat{v_{2}}({\bf k},t),\label{BB25}\\
\widehat{w_{1}}({\bf k},t)&=&\frac{1}{2\alpha({\bf k})}\widehat{\zeta}({\bf k},t)+\frac{k_{x}}{2|{\bf k}|}\widehat{v_{1}}({\bf k},t)+\frac{k_{y}}{2|{\bf k}|}\widehat{v_{2}}({\bf k},t),\nonumber\\
\widehat{w_{2}}({\bf k},t)&=&-\frac{1}{2\alpha({\bf k})}\widehat{\zeta}({\bf k},t)+\frac{k_{x}}{2|{\bf k}|}\widehat{v_{1}}({\bf k},t)+\frac{k_{y}}{2|{\bf k}|}\widehat{v_{2}}({\bf k},t),\nonumber
\end{eqnarray}
leads to the system
\begin{eqnarray}
\frac{d}{dt}\begin{pmatrix}\widehat{\eta}\\\widehat{w_{1}}\\\widehat{w_{2}}\end{pmatrix}({\bf k},t)+i|{\bf k}|\begin{pmatrix}0&&\\&\sigma({\bf k})&\\&&-\sigma({\bf k})\end{pmatrix}\begin{pmatrix}\widehat{\eta}\\\widehat{w_{1}}\\\widehat{w_{2}}\end{pmatrix}(k,t)\nonumber&&\\
+iP({\bf k})^{-1}F(\widehat{\eta},\widehat{w_{1}},\widehat{w_{2}})=0.&&\label{BB23b}
\end{eqnarray}
According to (\ref{BB25}), we have the following cases:
\begin{itemize}
\item[(A)] $\alpha({\bf k})$ of order $0$ ($b,d>0, a,c<0$ or $b,d>0, a=c\leq 0$). Then:
\begin{eqnarray*}
(\zeta,v_{1},v_{2})\in (H^{s})^{3}\Rightarrow (\eta,w_{1},w_{2})\in (H^{s})^{3}.
\end{eqnarray*}
\item[(B)] $\alpha({\bf k})$ of order $-1$ ($b,d>0, a=0,c<0$). Then:
\begin{eqnarray*}
(\zeta,v_{1},v_{2})\in H^{s+1}\times (H^{s})^{2}\Rightarrow (\eta,w_{1},w_{2})\in (H^{s})^{3}.
\end{eqnarray*}
\item[(C)] $\alpha({\bf k})$ of order $1$ ($b,d>0, a<0,c=0$). Then:
\begin{eqnarray*}
(\zeta,v_{1},v_{2})\in H^{s-1}\times (H^{s})^{2}\Rightarrow (\eta,w_{1},w_{2})\in (H^{s})^{3}.
\end{eqnarray*}
\item[(D)] $\alpha({\bf k})$ of order $2$ ($b=0,d>0, a<0,c=0$). Then:
\begin{eqnarray*}
(\zeta,v_{1},v_{2})\in H^{s-2}\times (H^{s})^{2}\Rightarrow (\eta,w_{1},w_{2})\in (H^{s})^{3}.
\end{eqnarray*}
\item[(E)] $\alpha({\bf k})$ of order $-2$ ($b>0,d=0, a=0,c<0$). Then:
\begin{eqnarray*}
(\zeta,v_{1},v_{2})\in H^{s+2}\times (H^{s})^{2}\Rightarrow (\eta,w_{1},w_{2})\in (H^{s})^{3}.
\end{eqnarray*}
\end{itemize}
Theorem \ref{theo21} concerns the first three cases. By way of illustration, we consider the proof of (i), writing (\ref{BB23b}) as
\begin{eqnarray*}
\frac{d}{dt}\begin{pmatrix}{\eta}\\{w_{1}}\\{w_{2}}\end{pmatrix}+\mathcal{B}\begin{pmatrix}{\eta}\\{w_{1}}\\{w_{2}}\end{pmatrix}+\mathcal{F}({\eta},{w_{1}},{w_{2}})=0,
\end{eqnarray*}
where $\mathcal{B}$ is the operator with Fourier symbol
$$i|{\bf k}|\begin{pmatrix}0&&\\&\sigma({\bf k})&\\&&-\sigma({\bf k})\end{pmatrix},$$ and
the Fourier representation of $\mathcal{F}$ is
$$iP({\bf k})^{-1}F(\widehat{\eta},\widehat{w_{1}},\widehat{w_{2}}).$$ We apply the periodic version of Lemma 2.2 in \cite{DMS2007} (see also Lemma 2.1 in \cite{A}) to have
\begin{eqnarray*}
||\partial_{x_{k}}(I-r\Delta)^{-1}(fg)||_{s}\leq ||fg||_{s-1}\leq C||f||_{s}||g||_{s},
\end{eqnarray*} 
for $s>0, x_{k}=x,y, r=b>0$ or $d>0$. Then $\mathcal{F}:(H^{s})^{3}\rightarrow (H^{s})^{3}$ is bilinear continuous and contraction-mapping argument leads to well-posedness for $s>0$.
\end{proof}
Well-posedness of other admissible B/B systems (when $b=0$ or $d=0$) are considered in the following result. The classification depends on the order of $\alpha({\bf k})$ and those remaining cases mentioned in the proof of Theorem \ref{theo21}, cf. \cite{A}.
\begin{theorem}
\label{theo22}
\begin{itemize}
\item[(i)] Assume $b=0, d>0, a<0, c=0$. Let $s>1$ and $(\zeta_{0},v^{0})\in H^{s}\times (H^{s+2})^{2}$. Then there are some $T>0$ and a unique solution $(\zeta,v)\in X_{T}=C(0,T,H^{s})\times C(0,T,H^{s+2})^{2}$ of (\ref{BB21}), (\ref{BB22}).
\item[(ii)] Assume $b=0, d>0, a<0, c<0$. Let $s>1$ and $(\zeta_{0},v^{0})\in H^{s}\times (H^{s+1})^{2}$. Then there are some $T>0$ and a unique solution $(\zeta,v)\in X_{T}=C(0,T,H^{s})\times C(0,T,H^{s+1})^{2}$ of (\ref{BB21}), (\ref{BB22}).
\item[(iii)] Assume $b>0, d=0, a<0, c=0$ or $b=0, d>0, a=0, c<0$. Let $s>2$ and $(\zeta_{0},v^{0})\in (H^{s})^{3}$. Then there are some $T>0$ and a unique solution $(\zeta,v)\in X_{T}=C(0,T,H^{s})^{3}$ of (\ref{BB21}), (\ref{BB22}).
\item[(iv)] Assume $b>0, d=0, a<0, c<0$ or $b=0, d>0, a=c=0$. Let $s>2$ and $(\zeta_{0},v^{0})\in H^{s+1}\times (H^{s})^{2}$. Then there are some $T>0$ and a unique solution $(\zeta,v)\in X_{T}=C(0,T,H^{s+1})\times C(0,T,H^{s})^{2}$ of (\ref{BB21}), (\ref{BB22}).
\end{itemize}
\end{theorem}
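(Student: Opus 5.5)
Since $b=0$ or $d=0$ in each case of Theorem \ref{theo22}, one of the smoothing operators $(I-b\Delta)^{-1}$, $(I-d\Delta)^{-1}$ is absent. The nonlinearity then loses a derivative, and the contraction-mapping argument used for Theorem \ref{theo21} no longer closes. I would therefore follow the energy-method approach of \cite{A} (see also \cite{BonaChS2004,DDS1}) and adapt it to the periodic setting. Fourier series replace the Fourier transform throughout. The only analytical tools needed are the commutator (Kato--Ponce) estimate and the Moser-type product estimates on the torus $\mathbb{T}^{2}=(0,1)^{2}$, both of which hold in the periodic case with the same proofs.

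The plan proceeds in four steps.
\begin{enumerate}
\item \textbf{Regularization.} I would introduce a regularized problem, for instance by truncating to Fourier modes $|{\bf k}|\leq N$, which is exactly the Galerkin system of section \ref{sec3}, or by mollifying the nonlinear terms. This regularized problem is a Lipschitz ODE in $(H^{s})^{3}$, so it has local solutions.
\item \textbf{Energy functional.} For each case, using the order of $\alpha({\bf k})$ listed in (A)--(E) in the proof of Theorem \ref{theo21}, I would define an energy that symmetrizes the linear part. In case (i) ($b=0$, $d>0$, $a<0$, $c=0$), the natural choice is
$$E_{s}(t)=\|\zeta\|_{s}^{2}+r_{1}^{-1}r_{2}\Big(\|\Lambda^{s}{\bf v}\|^{2}+d\|\nabla\Lambda^{s}{\bf v}\|^{2}\Big)-\frac{a}{r_{1}}\text{-weighted }\|\Delta\Lambda^{s}{\bf v}\|^{2}\text{-type terms},$$
with $\Lambda=(I-\Delta)^{1/2}$. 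The weights are chosen so that applying $\Lambda^{s}$ to the equations, testing the $\zeta$-equation with $\Lambda^{s}\zeta$ and the ${\bf v}$-equation with $\Lambda^{s}$ of a suitable multiple of ${\bf v}$ (or of $(I-\Delta){\bf v}$), makes the linear dispersive contributions cancel after integration by parts. The remaining cases (ii)--(iv) are handled the same way, with the Sobolev indices shifted according to the order of $\alpha$.
\item \textbf{Nonlinear bounds.} The terms $\lambda\nabla\cdot(\zeta{\bf v})$ and $\frac{\lambda}{2}\nabla|{\bf v}|^{2}$ would be split into a top-order transport part, e.g.\ $({\bf v}\cdot\nabla)\Lambda^{s}\zeta$, plus commutators. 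The transport part is treated by integration by parts, giving $\tfrac12\int(\nabla\cdot{\bf v})|\Lambda^{s}\zeta|^{2}$. The commutators are bounded by the Kato--Ponce estimate. The conditions $s>1$ and $s>2$ enter here: they guarantee that $\nabla{\bf v}$, $\nabla\zeta$, or $\nabla^{2}{\bf v}$, as required, lie in $L^{\infty}$ via the embedding $H^{\sigma}\subset L^{\infty}$ for $\sigma>1$ in 2D. The target inequality is $E_{s}'\lesssim E_{s}^{3/2}$, uniformly in the regularization parameter.
\item \textbf{Passage to the limit.} This gives a uniform existence time $T$. I would then pass to the limit by compactness (Aubin--Lions on the torus) and prove uniqueness through an $L^{2}$-level energy estimate for the difference of two solutions. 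Continuity in time with values in $X_{T}$ would follow from the Bona--Smith argument.
\end{enumerate}

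The main obstacle is step 2 combined with step 3: finding, case by case, the weighted energy whose linear part is exactly conserved, while checking that the loss of one derivative in the unsmoothed equation is absorbed entirely by the transport/commutator structure. The delicate cases are those where the top-order nonlinear term couples $\zeta$ at regularity $s$ with ${\bf v}$ at regularity $s+1$ or $s+2$. The first thing I would try is the weighting used in \cite{A} for $\mathbb{R}^{2}$, verifying that every integration by parts used there remains valid with periodic boundary conditions. It does, because no boundary terms arise on the torus, so the whole-space estimates should transfer verbatim.
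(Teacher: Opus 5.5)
Your plan follows the same route as the paper. Both adapt the energy method of \cite{A} to the torus and apply $\Lambda^{s}$. Both test with multipliers that cancel the linear cross terms; in case (i) the paper uses $r_{2}\Lambda^{s}\zeta$ and $r_{1}\Lambda^{s}{\bf v}+a\Lambda^{s}\Delta{\bf v}$. Both bound the nonlinear terms by Kato--Ponce and $H^{s}\hookrightarrow L^{\infty}$ to get $Y'\lesssim Y^{3/2}$. The paper defers regularization, existence, uniqueness and continuity to \cite{A}, \cite{Lions}, \cite{BonaSmith}; your steps 1 and 4 just make that part explicit.

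Two corrections are needed. The minor one is that your weights are inverted. With coefficient $1$ on $\|\zeta\|_{s}^{2}$, the energy must be $\|\zeta\|_{s}^{2}+\frac{r_{1}}{r_{2}}\bigl(\|\Lambda^{s}{\bf v}\|^{2}+d\|\nabla\Lambda^{s}{\bf v}\|^{2}\bigr)+\frac{|a|}{r_{2}}\bigl(\|\nabla\Lambda^{s}{\bf v}\|^{2}+d\|\Delta\Lambda^{s}{\bf v}\|^{2}\bigr)$. With $r_{2}/r_{1}$ and $|a|/r_{1}$, the term $a(\Lambda^{s}\nabla\cdot\Delta{\bf v},\Lambda^{s}\zeta)$ survives, and $E_{s}$ does not control it.

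The substantive gap is in step 3. In two dimensions, $\frac{\lambda}{2}\nabla|{\bf v}|^{2}$ does not split into a transport part plus commutators. In fact $\frac{1}{2}\nabla|{\bf v}|^{2}=({\bf v}\cdot\nabla){\bf v}+\omega\,(v_{2},-v_{1})$ with $\omega=\partial_{x}v_{2}-\partial_{y}v_{1}$, and the vorticity term is top order with no symmetric structure.

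For $d>0$ this is harmless, because the ${\bf v}$-nonlinearity is never paired at the top level. In case (i), the only case the paper writes out, $|(\Lambda^{s}\nabla|{\bf v}|^{2},\Lambda^{s}\Delta{\bf v})|\lesssim\|{\bf v}\|_{s+1}^{2}\|{\bf v}\|_{s+2}$.

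In the $d=0$ alternatives of (iii) and (iv), however, $E_{s}$ controls ${\bf v}$ only through $\|\nabla\Lambda^{s-1}{\bf v}\|$, and you must bound $(\Lambda^{s-1}\nabla|{\bf v}|^{2},a\Lambda^{s-1}\Delta{\bf v})$. Linearize about a constant state ${\bf V}$ and write $\widehat{{\bf v}}=\alpha\hat{\bf k}+\beta\hat{\bf k}^{\perp}$ with $\hat{\bf k}={\bf k}/|{\bf k}|$. Up to a constant, this pairing is $\sum_{\bf k}(1+|{\bf k}|^{2})^{s-1}|{\bf k}|^{3}({\bf V}\cdot\hat{\bf k}^{\perp})\,{\rm Im}(\bar\beta\alpha)$. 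That is one derivative more than $E_{s}$ controls, and it is not a commutator. So $E_{s}'\lesssim E_{s}^{3/2}$ fails for rotational data.

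You have two ways to repair this. You can assume ${\rm curl}\,v^{0}=0$: when $d=0$, ${\bf v}_{t}$ is a gradient, so ${\rm curl}\,{\bf v}$ is conserved and $\frac{1}{2}\nabla|{\bf v}|^{2}=({\bf v}\cdot\nabla){\bf v}$ for all time. Alternatively, you can add $\zeta$--${\bf v}$ cross terms to the energy that exploit the coupling with $\zeta$, which is what makes the linearized problem well posed. The paper's sketch does not face this issue either.

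Finally, carry out your index bookkeeping via the order of $\alpha({\bf k})$ for the alternative $b=0$, $d>0$, $a=c=0$ in (iv). There $\alpha$ has order $+1$, so the energy closes with $\zeta$ one derivative rougher than ${\bf v}$. Already the linear flow sends ${\bf v}^{0}\in H^{s}$ to $\zeta(t)\in H^{s-1}$ only. Hence the space $H^{s+1}\times(H^{s})^{2}$ stated for that alternative is not what your argument (or any argument) can give, and the indices there appear to be swapped.
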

\begin{proof}
The proof of the corresponding results for the ivp in \cite{A} depends on a priori estimates and energy methods based on tools such as commutator estimates of Kato-Ponce type, the embedding
\begin{eqnarray}
H^{s}\hookrightarrow L^{\infty}, s>1,\label{*1}
\end{eqnarray} 
and classical inequalities, all valid in periodic Sobolev spaces. As in the previous result, we prove the case (i) by way of ilustration. The system (\ref{BB21}) has here the form
\begin{eqnarray}
\zeta_{t}+r_{1}\nabla\cdot{\bf v}+\lambda\nabla\cdot \left(\zeta{\bf v}\right)+a\nabla\cdot\Delta {\bf v}=0,\nonumber&&\\
(1-d\Delta)({\bf v})_{t}+r_{2}\nabla\zeta+\frac{\lambda}{2}\nabla |{\bf v}|^{2}=0,&&\;\;\label{BB24b}
\end{eqnarray}
As in \cite{A}, we apply the operator $\Lambda^{s}=(I-\Delta)^{s/2}$ to (\ref{BB24b}), multiply the first equation by $(1-\gamma)\Lambda^{s}\zeta$, the second by $r_{1}\Lambda^{s}{\bf v}+a\Lambda^{s}\Delta{\bf v}$ (Hadamard sense), integrate over $\Omega$ and add the resulting integrals. Using the periodic boundary conditions and after some computations, all this leads to
\begin{eqnarray*}
0&=&\int_{\Omega}\frac{(1-\gamma)}{2}\frac{d}{dt}|\Lambda^{s}\zeta|^{2}+\frac{r_{1}}{2}\frac{d}{dt}|\Lambda^{s}{\bf v}|^{2}+\frac{r_{1}d}{2}\frac{d}{dt}|\Lambda^{s}\nabla{\bf v}|^{2}\\
&&-\frac{a}{2}\frac{d}{dt}|\Lambda^{s}\nabla{\bf v}|^{2}-\frac{ad}{2}\frac{d}{dt}|\Lambda^{s}\Delta{\bf v}|^{2}d\Omega\\
&&+\int_{\Omega}\left((1-\gamma)\lambda\Lambda^{s}\zeta\Lambda^{s}(\nabla\cdot(\zeta{\bf v}))+\frac{\lambda r_{1}}{2}\Lambda^{s}{\bf v}\Lambda^{s}\nabla\cdot|{\bf v}|^{2}\right.\\
&&\left.+\frac{a\lambda}{2}\Lambda^{s}\Delta{\bf v}\Lambda^{s}(\nabla\cdot|{\bf v}|^{2})\right)d\Omega.
\end{eqnarray*}
Now we make use of Kato-Ponce estimates, the embedding (\ref{*1}),
and Young's inequalities to obtain similar estimates to those of (\cite{A}; Theorem 3.3)
\begin{eqnarray*}
|\langle \Lambda^{s}\zeta,\Lambda^{s}(\nabla\cdot\zeta{\bf v})\rangle|&\lesssim& ||\zeta||_{s}^{3}+||{\bf v}||_{s+2}^{3},\\
|\langle \Lambda^{s}{\bf v},\Lambda^{s}\nabla\cdot|{\bf v}|^{2}\rangle|&\lesssim&||{\bf v}||_{s+1}^{3},\\
|\langle \Lambda^{s}\Delta{\bf v},\Lambda^{s}(\nabla\cdot|{\bf v}|^{2})\rangle|&\lesssim& ||{\bf v}||_{s+2}^{3}.
\end{eqnarray*}
Defining
\begin{eqnarray*}
Y(t)&=&\int_{\Omega}{(1-\gamma)}|\Lambda^{s}\zeta|^{2}+{\alpha_{1}}|\Lambda^{s}{\bf v}|^{2}+{\alpha_{1}d}|\Lambda^{s}\nabla{\bf v}|^{2}\\
&&-a|\Lambda^{s}\nabla{\bf v}|^{2}-{ad}|\Lambda^{s}\Delta{\bf v}|^{2}d\Omega,
\end{eqnarray*}
then we have
\begin{eqnarray*}
Y'(t)\lesssim Y(t)^{3/2},
\end{eqnarray*}
providing the bound of $Y(t), t\in [0,T], T$ small enough. Using that $a<0, d>0$, this leads to a priori bound of $(\zeta,{\bf v})$ in
$$L^{\infty}(0,T,H^{s})\times (L^{\infty}(0,T,H^{s+2}))^{2}.$$ The justification of the a priori estimate and the existence and uniqueness of solutions can be proved following the arguments in (\cite{A}, Sec. 2.2); see also \cite{Lions,BonaSmith}
\end{proof}
\begin{remark}
As in the case of the corresponding ivp, \cite{DMS2007}, the periodic ivp (\ref{BB21}), (\ref{BB22}) when $b=d$ admits a Hamiltonian structure
\begin{eqnarray*}
\frac{\partial}{\partial t}\begin{pmatrix} \zeta\\{\bf v}\end{pmatrix}+{\mathcal J}\delta H(\zeta,{\bf v})=0,
\end{eqnarray*}
with
\begin{eqnarray*}
{\mathcal J}=\begin{pmatrix}(I-b\Delta)^{-1}&0\\{\bf 0}&(I-b\Delta)^{-1}\end{pmatrix}\begin{pmatrix}0&{\rm div}\\ \nabla&{\bf 0}\end{pmatrix},
\end{eqnarray*}
and
\begin{eqnarray}
H(\zeta,{\bf v})=\int_{\Omega}\left(-c'|\nabla\zeta|^{2}-a|\nabla {\bf v}|^{2}+(r_{1}+\lambda\zeta)|{\bf v}|^{2}+r_{2}\zeta^{2}\right)d{\bf x},\label{Ham}
\end{eqnarray} 
where $|\nabla{\bf v}|^{2}=|\nabla v_{1}|^{2}+|\nabla v_{2}|^{2}=(\partial_{x}v_{1})^{2}+(\partial_{y}v_{1})^{2}+(\partial_{x}v_{2})^{2}+(\partial_{y}v_{2})^{2}$.
\end{remark}
\section{Spectral Fourier-Galerkin semidiscretization}
\label{sec3}
In this section we will analyze the discretization in space of (\ref{BB21}), (\ref{BB22}) with a spectral Fourier-Galerkin method. Let $N\geq 1$ be an integer and
\begin{eqnarray*}
S_{N}={\rm span}\{\phi_{{\bf k}}(x,y), {\bf k}=(k_{x},k_{y}), k_{j}\in\mathbb{Z}, -N\leq k_{j}\leq N, j=x,y\},
\end{eqnarray*}
where
\begin{eqnarray*}
\phi_{\bf k}(x,y)=e^{2\pi i{\bf x}\cdot{\bf k}},\quad {\bf x}=(x,y), {\bf x}\cdot{\bf k}=k_{x}x+k_{y}y.
\end{eqnarray*}
Let $P=P_{N}$ denote the $L^{2}$ projection operator onto $S_{N}$: for $u\in L^{2}(\Omega)$
\begin{eqnarray*}
P_{N}u=\sum_{ -N\leq k_{x},k_{y}\leq N}\widehat{u}_{\bf k}\phi_{\bf k},\quad
\widehat{u}_{\bf k}=\frac{1}{(2\pi)^{2}}\int_{\Omega}u({\bf x})\phi_{\bf k}({\bf x})d{\bf x}.
\end{eqnarray*}
In the sequel we will make use of the following estimates, also valid in $\Omega$, \cite{CHQZ}.
\begin{itemize}
\item[(i)] For integers $0\leq l\leq m$ and $u\in H_{p}^{m}(\Omega)$
\begin{eqnarray}
||u-P_{N}u||_{H^{l}}&\lesssim &N^{l-m}||u||_{H^{m}},\label{estim1}\\
|u-P_{N}u|_{\infty}&\lesssim & N^{1/2-m}||u||_{H^{m}},\quad m\geq 1.\label{estim2}
\end{eqnarray}
\item[(ii)] (Inverse inequalities.) For integers $0\leq l\leq m$, $\phi\in S_{N}$
\begin{eqnarray}
||\phi||_{H^{m}}\lesssim N^{m-l}||\phi||_{H^{l}}.\label{estim3}
\end{eqnarray}
\end{itemize}
The spectral Galerkin semidiscretization of (\ref{BB21}), (\ref{BB22}) is defined as follows: Let $T>0$. We seek for mappings $\zeta^{N}, v_{j}^{N}:[0,T]\rightarrow S_{N}, j=1,2,$ such that if ${\bf v}^{N}=(v_{1}^{N},v_{2}^{N})$, for $0\leq t\leq T$, $\varphi\in S_{N}, {\bf \chi}=(\chi_{1},\chi_{2})\in S_{N}\times S_{N}$, it holds that
 \begin{eqnarray}
(\zeta_{t}^{N},\varphi)+r_{1}(\nabla\cdot{\bf v}^{N},\varphi)+\lambda (\nabla\cdot(\zeta^{N}{\bf v}^{N},\varphi)+a(\Delta\nabla\cdot{\bf v}^{N},\varphi)&&\nonumber\\
-b(\Delta\zeta_{t}^{N},\varphi)=0,&&\label{BB31a}\\
\langle{\bf v}^{N}_{t},{\bf \chi}\rangle+r_{2}\langle\nabla\zeta^{N},{\bf \chi}\rangle+\frac{\lambda}{2}\langle\nabla |{\bf v}^{N}|^{2},{\bf \chi}\rangle+c'\langle\Delta\nabla\zeta^{N},{\bf \chi}\rangle-d\langle\Delta{\bf v}^{N}_{t},{\bf \chi}\rangle=0,&&\label{BB31b}\\
\zeta^{N}(0)=P_{N}\zeta_{0},\quad {\bf v}^{N}(0)=P_{N}{\bf v}_{0}.&&\label{BB31c}
\end{eqnarray}
The ode system (\ref{BB31a})-(\ref{BB31c}) can be written as
\begin{eqnarray}
(I-b\Delta)\zeta_{t}^{N}=-{\rm div}A_{N},\;
(I-b\Delta)\partial_{t}{\bf v}^{N}=-\nabla B_{N},\label{BB33a}
\end{eqnarray}
where
\begin{eqnarray}
A_{N}&=&r_{1}{\bf v}^{N}+\lambda \widetilde{P_{N}}(\zeta^{N}{\bf v}^{N}+a\Delta {\bf v}^{N},\nonumber\\
B_{N}&=&r_{2}\zeta^{N}+\frac{\lambda}{2}P_{N}(|{\bf v}^{N}|^{2})+c'\Delta\zeta^{N},\label{BB33b}
\end{eqnarray}
and $\widetilde{P_{N}}({\bf v})=(P_{N}(v_{1}),P_{N}v_{2})$ if ${\bf v}=(v_{1},v_{2})\in S_{N}\times S_{N}$. Using classical ode theory, the corresponding ivp
has a unique solution, locally in time. 
The Fourier representation is, for ${\bf k}=(k_{x},k_{y}), -N\leq k_{x},k_{y}\leq N$
\begin{eqnarray}
(1+b|{\bf k}|^{2})\partial_{t}\widehat{\zeta^{N}}({\bf k},t)+ir_{1}(k_{x}+k_{y})\widehat{{\bf v}^{N}}({\bf k},t)&&\nonumber\\
+i\lambda(k_{x}\widehat{\zeta^{N}v_{1}^{N}}({\bf k},t)+k_{y}\widehat{\zeta^{N}v_{2}^{N}}({\bf k},t)
-ia|{\bf k}|^{2}(k_{x}\widehat{v_{1}^{N}}({\bf k},t)+k_{y}\widehat{v_{2}^{N}}({\bf k},t))=0,&&\label{BB32a}\\
(1+d|{\bf k}|^{2})\partial_{t}\widehat{{\bf v}^{N}}({\bf k},t)+ir_{2}\begin{pmatrix}k_{x}\\k_{y}\end{pmatrix}\widehat{\zeta^{N}}({\bf k},t)&&\nonumber\\
+i\frac{\lambda}{2}\begin{pmatrix}k_{x}\\k_{y}\end{pmatrix}\widehat{|{\bf v}^{N}|^{2}}({\bf k},t)-ic'{|{\bf k}|^{2}}\begin{pmatrix}k_{x}\\k_{y}\end{pmatrix}\widehat{\zeta^{N}}({\bf k},t)=0,&&\label{BB32b}\\
\widehat{\zeta^{N}}({\bf k},0)=\widehat{\zeta_{0}}({\bf k}),\quad \widehat{{\bf v}^{N}}({\bf k},0)=\widehat{{\bf v}_{0}}({\bf k}).&&\label{BB32c}
\end{eqnarray}
\subsection{Conservation properties}
\begin{proposition}
\label{prop31} Assume $b=d$. While the solution $(\zeta^{N},{\bf v}^{N})$ of (\ref{BB31a})-(\ref{BB31c}) exists, then 
\begin{eqnarray*}
\frac{d}{dt}H(\zeta^{N},{\bf v}^{N})=0,
\end{eqnarray*}
where $H$ is given by (\ref{Ham}).
\end{proposition}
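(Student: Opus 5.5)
The plan is to differentiate $H(\zeta^{N},{\bf v}^{N})$ in time and check that the resulting expression is an $L^{2}$ pairing that vanishes by the structure of the Galerkin equations (\ref{BB33a}), (\ref{BB33b}) with $b=d$. Everything is real-valued and lives in $S_{N}$, so we may integrate by parts freely using periodicity. We interpret (\ref{Ham}) with the factor $\frac{1}{2}$ normalization implicit in the Hamiltonian formulation; any overall constant is irrelevant for the conclusion.

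First compute the variational derivative of $H$ at $(\zeta^{N},{\bf v}^{N})$, up to the normalizing factor:
\begin{eqnarray*}
\delta_{\zeta}H&=&r_{2}\zeta^{N}+c'\Delta\zeta^{N}+\frac{\lambda}{2}|{\bf v}^{N}|^{2},\\
\delta_{{\bf v}}H&=&r_{1}{\bf v}^{N}+a\Delta{\bf v}^{N}+\lambda\zeta^{N}{\bf v}^{N}.
\end{eqnarray*}
These are, respectively, the unprojected versions of $B_{N}$ and $A_{N}$ in (\ref{BB33b}). Hence
\begin{eqnarray*}
\frac{d}{dt}H=(\delta_{\zeta}H,\zeta^{N}_{t})+\langle\delta_{{\bf v}}H,{\bf v}^{N}_{t}\rangle.
\end{eqnarray*}
Since $\zeta^{N}_{t}$ and ${\bf v}^{N}_{t}$ belong to $S_{N}$, the $L^{2}$ orthogonality of $P_{N}$ lets us replace $\delta_{\zeta}H$ by $P_{N}\delta_{\zeta}H=B_{N}$, and $\delta_{{\bf v}}H$ by $\widetilde{P_{N}}\delta_{{\bf v}}H=A_{N}$. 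This projection step is the key point: it is exactly why the nonlinear terms in (\ref{BB33b}) are written with $P_{N}$.

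Next substitute the equations. With $b=d$, $L=(I-b\Delta)$ is invertible on $S_{N}$, self-adjoint, and commutes with $\nabla$, ${\rm div}$ and $P_{N}$. Thus $\zeta^{N}_{t}=-L^{-1}{\rm div}A_{N}$ and ${\bf v}^{N}_{t}=-L^{-1}\nabla B_{N}$, and
\begin{eqnarray*}
\frac{d}{dt}H=-(B_{N},L^{-1}{\rm div}A_{N})-\langle A_{N},L^{-1}\nabla B_{N}\rangle.
\end{eqnarray*}
Moving $L^{-1}$ across by self-adjointness and integrating by parts gives $\langle A_{N},\nabla L^{-1}B_{N}\rangle=-(L^{-1}{\rm div}A_{N},B_{N})$, so the two terms cancel. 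This is precisely the skew-adjointness of $\mathcal{J}$ restricted to $S_{N}$.

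The main obstacle is bookkeeping rather than analysis. One must confirm that the nonlinear terms of the semidiscrete system are exactly the projections of the gradient of the cubic term $\lambda\int\zeta|{\bf v}|^{2}$: $\partial_{\zeta}$ gives $|{\bf v}|^{2}$ and $\partial_{{\bf v}}$ gives $2\zeta{\bf v}$, which matches the coefficients $\lambda/2$ and $\lambda$ after the factor $\frac{1}{2}$ normalization. One must also check that the linear coefficients $r_{1},r_{2},a,c'$ pair consistently. If the normalization in (\ref{Ham}) is taken literally without the $\frac{1}{2}$, the same computation shows $\frac{d}{dt}H$ equals twice the vanishing expression, so the conclusion is unaffected.
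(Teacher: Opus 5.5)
Your proof is correct and follows the paper's argument. Both differentiate $H$, use $(P_N\varphi,\psi)=(\varphi,\psi)$ for $\psi\in S_N$ to replace the variational derivatives by $B_N$ and $A_N$, substitute (\ref{BB33a}), and cancel the two resulting terms by self-adjointness of $(I-b\Delta)^{-1}$ together with integration by parts. Your remark on the factor $2$ is also accurate: the paper's expression (\ref{BB33c}) is really $\frac{1}{2}\frac{d}{dt}H$, which does not affect the conclusion.
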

\begin{proof}
Using integration by parts and the periodic boundary conditions, we compute
\begin{eqnarray}
\frac{d}{dt}H(\zeta^{N},{\bf v}^{N})&=&\int_{\Omega}\left(\zeta_{t}^{N}\left(c'\Delta \zeta^{N}+\frac{\lambda}{2}|{\bf v}^{N}|^{2}+r_{2}\zeta^{N}\right)\right.\nonumber\\
&&+\left.\partial_{t}{\bf v}^{N}\cdot\left(a\Delta{\bf v}^{N}+r_{1}{\bf v}^{N}+\lambda(\zeta^{N}{\bf v}^{N}\right)\right)d{\bf x}.\label{BB33c}
\end{eqnarray}
Now, from (\ref{BB33a}), (\ref{BB33b}), and the property
\begin{eqnarray*}
(P_{N}\varphi,\psi)=(\varphi,\psi),\quad \varphi\in L^{2},\; \psi\in S_{N},
\end{eqnarray*}
then (\ref{BB33c}) has the form
\begin{eqnarray*}
\frac{d}{dt}H(\zeta^{N},{\bf v}^{N})=-\int_{\Omega}\left(\left((I-b\Delta)^{-1}{\rm div}A_{N}\right)B_{N}+A_{N}\cdot (I-b\Delta)^{-1}\nabla B_{N}\right)d{\bf x},
\end{eqnarray*}
which, using again integration by parts and the periodic boundary conditions, vanishes.
\end{proof}
\subsection{Error estimates}
In this section we estimate the error of the semidiscrete scheme (\ref{BB31a}), (\ref{BB31b}) for some well-posed B/B systems, extending the results obtained in \cite{DDS0} for the one-dimensional case.

Let $\theta=\zeta^{N}-P_{N}\zeta, \rho=P_{N}\zeta-\zeta$, so that $\zeta^{N}-\zeta=\theta+\rho$, and ${\bf \xi}=(\xi_{1},\xi_{2})={\bf v}^{N}-\widetilde{P_{N}}{\bf v}, {\bf \sigma}=(\sigma_{1},\sigma_{2})=\widetilde{P_{N}}{\bf v}-{\bf v}$, so that ${\bf v}^{N}-{\bf v}={\bf \xi}+{\bf \sigma}$. After some computations, the system for $\theta$ and ${\bf \xi}$ takes the form, for $\varphi\in S_{N}, {\bf \chi}=(\chi_{1},\chi_{2})\in S_{N}\times S_{N}$
\begin{eqnarray}
(\theta_{t},\varphi)+a(\Delta\nabla\cdot\xi,\varphi)-b(\Delta\theta_{t},\varphi)&=&-r_{1}(\nabla\cdot\xi,\varphi)-(\nabla\cdot A,\varphi),\label{BB34a}\\
\langle\xi_{t}\chi\rangle+c'\langle\Delta\nabla\theta,\chi\rangle-d\langle\Delta\xi_{t},\chi\rangle&=&-r_{2}\langle\nabla\theta,\chi\rangle-\langle\nabla B,\chi\rangle,\label{BB34b}
\end{eqnarray}
where
\begin{eqnarray}
A&=&\lambda(\zeta^{N}{\bf v}^{N}-\zeta{\bf v})=\lambda\left(\rho{\bf v}+\zeta{\bf \sigma}+\theta{\bf v}+\zeta\xi+\theta\sigma+\rho\xi+\rho\sigma+\theta\xi\right),\label{BB34c}\\
B&=&\frac{\lambda}{2}(|{\bf v}^{N}|^{2}-{\bf v}|^{2})=\lambda\left({\bf v}\cdot\sigma+{\bf v}\cdot\xi+\sigma\cdot\xi+\frac{|\xi|^{2}+|\sigma|^{2}}{2}\right).\label{BB34d}
\end{eqnarray}
According to the well-posedness analysis in section \ref{sec2}, and classified from \cite{A}, we will obtain convergence results in the following cases:
\begin{itemize}
\item Weakly dispersive B/B systems (cf. Theorem \ref{theo21}):
\begin{itemize}
\item[(C1)] $b,d>0, a=c=0$.
\item[(C2)] $b,d>0, a,c<0$.
\item[(C3)] $b,d>0, a=0, c<0$.
\item[(C4)] $b,d>0, a<0, c=0$.
\end{itemize}
\item Other admissible B/B systems (cf. Theorem \ref{theo22}):
\begin{itemize}
\item[(C5)] $b=0, d>0, a, c<0$.
\item[(C6)] $b=0,d>0, a<0, c=0$.
\item[(C7)] $b>0,d=0, a=c=0$.
\item[(C8)] $b=0,d>0, a=c=0$.
\end{itemize}
\end{itemize}
\begin{proposition}
\label{prop32}
Assume that the solution $(\zeta,{\bf v})=(\zeta,v_{1},v_{2})$ of (\ref{BB21})-(\ref{BB22}) satisfies that $\zeta,v_{1},v_{2}\in C^{1}(0,T,H^{\mu}), \mu\geq 1.$. Let $a,b,c,d$ be in one of the cases (C1)-(C4). Then
\begin{eqnarray}
\max_{0\leq t\leq T}\left(||\zeta^{N}-\zeta||+||{\bf v}^{N}-{\bf v}||\right)&\lesssim&N^{-\mu},\label{BB35a}\\
\max_{0\leq t\leq T}\left(||\zeta^{N}-\zeta||_{1}+||{\bf v}^{N}-{\bf v}||_{1}\right)&\lesssim&N^{1-\mu},\label{BB35b}
\end{eqnarray}
\end{proposition}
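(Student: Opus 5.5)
The plan is the standard energy argument for the projected errors $\theta=\zeta^{N}-P_{N}\zeta$ and ${\bf \xi}={\bf v}^{N}-\widetilde{P_{N}}{\bf v}$, using the error equations (\ref{BB34a})--(\ref{BB34d}). Since $\rho$ and ${\bf \sigma}$ are controlled by (\ref{estim1}), it suffices to prove
\begin{eqnarray*}
\max_{0\leq t\leq T}\left(||\theta||_{1}+||\xi||_{1}\right)\lesssim N^{-\mu}.
\end{eqnarray*}
Then (\ref{BB35a}) follows from $||\rho||\lesssim N^{-\mu}$, and (\ref{BB35b}) from $||\rho||_{1}\lesssim N^{1-\mu}$.

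\textbf{Setting up the energy.} I take $\varphi=\theta$ in (\ref{BB34a}) and ${\bf \chi}=\xi$ in (\ref{BB34b}). Because $b,d>0$, the left-hand sides give $\frac{1}{2}\frac{d}{dt}(||\theta||^{2}+b||\nabla\theta||^{2})$ and $\frac{1}{2}\frac{d}{dt}(||\xi||^{2}+d||\nabla\xi||^{2})$, which are equivalent to $H^{1}$ norms. The treatment of the dispersive terms depends on the case.
\begin{itemize}
\item In case (C1), $a=c=0$, so these terms are absent.
\item In case (C2), I first weight the two equations, multiplying (\ref{BB34a}) by $-c'$ (or using the symmetrizer suggested by $\alpha({\bf k})$). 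Then $a(\Delta\nabla\cdot\xi,\theta)$ and $c'\langle\Delta\nabla\theta,\xi\rangle$ can be integrated by parts against each other. If exact cancellation fails, I instead move them to the right-hand side. There the operators $(I-b\Delta)^{-1}\Delta\nabla\cdot$ and $(I-d\Delta)^{-1}\Delta\nabla$ are of order one, so they are bounded by $||\xi||_{1}||\theta||_{1}$ and $||\theta||_{1}||\xi||_{1}$.
\item In cases (C3) and (C4), only one such term appears, and the same order-one bound handles it.
\end{itemize}
The linear terms $r_{1}(\nabla\cdot\xi,\theta)$ and $r_{2}\langle\nabla\theta,\xi\rangle$ are bounded by $||\xi||_{1}||\theta||+||\theta||_{1}||\xi||$.

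\textbf{Nonlinear terms.} These are $(\nabla\cdot A,\theta)$ and $\langle\nabla B,\xi\rangle$, expanded as in (\ref{BB34c}), (\ref{BB34d}). I integrate by parts to move the derivative onto $\theta$ or $\xi$, so each becomes at most $||A||\,||\nabla\theta||$ or $||B||\,||\nabla\xi||$.
\begin{itemize}
\item The terms linear in $\rho$ or ${\bf \sigma}$ with smooth coefficients $\zeta$, ${\bf v}$ are $O(N^{-\mu})$ by (\ref{estim1}). This needs $\zeta,{\bf v}\in L^{\infty}$, which comes from the regularity assumption for the exact solution via (\ref{*1}); if needed I assume slightly more regularity of the exact solution.
\item The terms $\theta{\bf v}$, $\zeta\xi$ and ${\bf v}\cdot\xi$ are $\lesssim ||\theta||+||\xi||$.
\item The quadratic terms $\theta\sigma$, $\rho\xi$ and $\rho\sigma$ are handled with (\ref{estim2}), which gives $|\rho|_{\infty},|\sigma|_{\infty}\lesssim N^{1/2-\mu}$, bounded for $\mu\geq 1$.
\end{itemize}

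\textbf{Main obstacle and bootstrap.} The main difficulty is the purely nonlinear terms $\theta\xi$ and $|\xi|^{2}$. I bound these by $|\xi|_{\infty}||\theta||$ and $|\xi|_{\infty}||\xi||$. In two dimensions $H^{1}\not\hookrightarrow L^{\infty}$, so I use the inverse inequality (\ref{estim3}) together with the logarithmic discrete Sobolev bound on $S_{N}$:
\begin{eqnarray*}
|\xi|_{\infty}\lesssim (\log N)^{1/2}||\xi||_{1}\quad\mbox{or}\quad |\xi|_{\infty}\lesssim N||\xi||.
\end{eqnarray*}
I then run a bootstrap on the maximal $t_{N}\leq T$ such that $|\theta|_{\infty}+|\xi|_{\infty}\leq 1$ on $[0,t_{N}]$. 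On that interval I obtain
\begin{eqnarray*}
\frac{d}{dt}E\lesssim E+N^{-2\mu},\qquad E=||\theta||_{1}^{2}+||\xi||_{1}^{2},\quad E(0)=0.
\end{eqnarray*}
Gronwall's inequality gives $E\lesssim N^{-2\mu}$, hence $|\xi|_{\infty}\lesssim(\log N)^{1/2}N^{-\mu}<1$ for $N$ large. Continuity then gives $t_{N}=T$, and also yields existence of the semidiscrete solution up to $T$.

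The delicate points are this $L^{\infty}$ step for the borderline case $\mu=1$, where a logarithm may need to be absorbed or slightly more regularity assumed, and the precise symmetrization of the third-order terms in case (C2).
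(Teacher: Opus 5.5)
Your treatment of (C1) and (C2) is the paper's argument. In (C2), multiplying (\ref{BB34a}) by $-c'$ and (\ref{BB34b}) by $-a$ makes the third-order terms cancel exactly, since $-ac'\left[(\Delta\nabla\cdot\xi,\theta)+\langle\Delta\nabla\theta,\xi\rangle\right]=0$. So commit to that and drop the fallback.

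\textbf{The gap is in (C3) and (C4).} There you rely on exactly that fallback, and it is wrong. With $\varphi=\theta$, ${\bf \chi}=\xi$, the operators $(I-b\Delta)$ and $(I-d\Delta)$ are already used up in producing $\frac{d}{dt}(\|\theta\|^{2}+b\|\nabla\theta\|^{2})$ and $\frac{d}{dt}(\|\xi\|^{2}+d\|\nabla\xi\|^{2})$. What survives is the bare third-order coupling. In (C3) this is $c'\langle\Delta\nabla\theta,\xi\rangle=-c'\langle\nabla\theta,\nabla(\nabla\cdot\xi)\rangle$. It costs $\|\theta\|_{1}\|\xi\|_{2}$, or at best $\|\theta\|_{3/2}\|\xi\|_{3/2}$, and your $H^{1}\times H^{1}$ energy does not control that.

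Inverting first does not help. $(I-d\Delta)^{-1}\Delta\nabla$ is of order one, so it maps $\theta\in H^{1}$ only into $L^{2}$. An $H^{1}$ bound on $\xi$ would therefore need $\theta\in H^{2}$. Unlike (C2), there is no second dispersive term to cancel against. Hence the intermediate bound $\|\theta\|_{1}+\|\xi\|_{1}\lesssim N^{-\mu}$ cannot be closed this way in (C3)/(C4), and it is more than the statement needs.

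\textbf{The fix.} Choose the norm according to the order of $\alpha({\bf k})$ (cases (B), (C) in the proof of Theorem \ref{theo21}): measure $(\theta,\xi)$ in $H^{1}\times L^{2}$ for (C3) and in $L^{2}\times H^{1}$ for (C4). The paper does (C3) not by an energy identity but by a direct Lipschitz bound on the time derivatives. It writes $\theta_{t}=-r_{1}T_{b}(\nabla\cdot\xi)-T_{b}P_{N}(\nabla\cdot A)$ and $\xi_{t}=-c'T_{d}\Delta\nabla\theta-r_{2}T_{d}\nabla\theta-T_{d}\widetilde{P_{N}}(\nabla B)$, with $T_{\alpha}=(I-\alpha\Delta)^{-1}$. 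Then (\ref{BB37d}) gives $\|\theta_{t}\|_{1}\lesssim\|\xi\|+\|A\|$ and $\|\xi_{t}\|\lesssim\|\theta\|_{1}+\|B\|$. Under an $L^{\infty}$ bootstrap on $\xi$ one has $\|A\|+\|B\|\lesssim N^{-\mu}+\|\theta\|_{1}+\|\xi\|$, and Gronwall yields $\|\theta\|_{1}+\|\xi\|\lesssim N^{-\mu}$. This gives (\ref{BB35a}) directly. For (\ref{BB35b}), combine the inverse inequality $\|\xi\|_{1}\lesssim N\|\xi\|$ with (\ref{estim1}). Case (C4) is symmetric.

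\textbf{Where $\mu=1$ is really borderline.} In (C1)/(C2) your logarithmic bound $|\xi|_{\infty}\lesssim(\log N)^{1/2}\|\xi\|_{1}$ closes the bootstrap for every $\mu\geq 1$, so that is not where the difficulty lies. In the mixed norm, the component controlled only in $L^{2}$ satisfies just $|\xi|_{\infty}\lesssim N\|\xi\|\lesssim N^{1-\mu}$. This is where the case $\mu=1$ genuinely needs care.
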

\begin{proof}
We first consider the case (C1). Taking $\varphi=\theta, \chi=\xi$ in (\ref{BB34a}), (\ref{BB34b}) and after some computations, we have
\begin{eqnarray*}
\frac{d}{dt}\left(\frac{||\theta||^{2}}{2}+b\frac{||\nabla\theta||^{2}}{2}\right)&=&r_{1}\langle\xi,\nabla\theta\rangle+\langle A,\nabla\theta\rangle,\\
\frac{d}{dt}\left(\frac{||\xi||^{2}}{2}+d\frac{||\nabla\xi_{1}||^{2}+||\nabla\xi_{2}||^{2}}{2}\right)&=&r_{2}(\theta,\nabla\cdot\xi)+(B,\nabla\cdot\xi).
\end{eqnarray*}
We add the previous equations to have
\begin{eqnarray*}
&&\frac{1}{2}\frac{d}{dt}\left(||\theta||^{2}+||\xi||^{2}+b||\nabla\theta||^{2}+d(||\nabla\xi_{1}||^{2}+||\nabla\xi_{2}||^{2}\right)\nonumber\\
&=&r_{1}\langle\xi,\nabla\theta\rangle+\langle A,\nabla\theta\rangle+r_{2}(\theta,\nabla\cdot\xi)+(B,\nabla\cdot\xi).\label{BB36a}
\end{eqnarray*}
According to (\ref{BB34c}), we now estimate each of the terms in $\langle A,\nabla\theta\rangle$, as follows: Using (\ref{estim1}), we have
\begin{eqnarray*}
\left|\langle\rho{\bf v},\nabla\theta\rangle\right|&=&\left|\int_{\Omega}\rho(v_{1}\theta_{x}+v_{2}\theta_{y})d{\bf x}\right|\leq |{\bf v}|_{\infty}||\rho||||\nabla\theta||\\
&\lesssim & N^{-2\mu}+||\nabla\theta||^{2}.
\end{eqnarray*}
Similarly
\begin{eqnarray*}
\left|\langle\zeta{\bf \sigma},\nabla\theta\rangle\right|&\leq& |{\zeta}|_{\infty}||\sigma||||\nabla\theta||\lesssim  N^{-2\mu}+||\nabla\theta||^{2},\\
\left|\langle\theta{\bf v},\nabla\theta\rangle\right|&=&\left|\int_{\Omega}\theta(v_{1}\theta_{x}+v_{2}\theta_{y})d{\bf x}\right|\leq |{\bf v}|_{\infty}|\int_{\Omega}(\theta\theta_{x}+\theta\theta_{y})d{\bf x}\\
&\leq & |{\bf v}|_{\infty}||\theta||_{1}^{2},\\
\left|\langle\zeta{\bf \xi},\nabla\theta\rangle\right|&\leq& |{\zeta}|_{\infty}||\xi||||\nabla\theta||\lesssim  N^{-2\mu}+||\nabla\theta||^{2}.
\end{eqnarray*}
Using (\ref{estim2}), we have $|\sigma|_{\infty}\leq C$ and therefore
\begin{eqnarray*}
\left|\langle\theta{\bf \sigma},\nabla\theta\rangle\right|\lesssim |{\sigma}|_{\infty}||\theta||_{1}^{2}\lesssim ||\theta||_{1}^{2}.
\end{eqnarray*}
Similarly
\begin{eqnarray*}
\left|\langle\rho{\bf \xi},\nabla\theta\rangle\right|&\leq &|{\rho}|_{\infty}||\xi||||\nabla\theta||\lesssim  ||\xi||^{2}+||\nabla\theta||^{2}.\\
\left|\langle\rho{\bf \sigma},\nabla\theta\rangle\right|&\leq &|{\rho}|_{\infty}||\sigma||||\nabla\theta||\lesssim  N^{-2\mu}+||\nabla\theta||^{2}.
\end{eqnarray*}
The same argument of continuity of $\theta$ as that of \cite{DDS0} is used to define $t_{N}$, $0<t_{N}\leq T$, as the maximal time for which the solution of (\ref{BB31a})-(\ref{BB31c}) exists and satisfies
\begin{eqnarray}
|\theta|_{\infty}\leq 1,\; 0\leq t\leq t_{N}.\label{maximal}
\end{eqnarray}
Then, for $0\leq t\leq t_{N}$
\begin{eqnarray*}
\left|\langle \theta\xi,\nabla\theta\rangle\right|\leq |\theta|_{\infty}||\xi||||\nabla\theta||\leq \frac{1}{2}\left(||\xi||^{2}+||\nabla\theta||^{2}\right).
\end{eqnarray*}
Therefore, if $0\leq t\leq t_{N}$
\begin{eqnarray}
\left|\langle A,\nabla\theta\rangle\right|\lesssim N^{-2\mu}+||\xi||^{2}+||\theta||_{1}^{2}.\label{BB36b}
\end{eqnarray}
On the other hand
\begin{eqnarray*}
\left|(\theta,\nabla\cdot\xi)\right|&\leq& ||\theta||||\nabla\cdot\xi||\leq \frac{1}{2}\left(||\theta||^{2}+||\xi||_{1}^{2}\right),\\
\left|\langle \xi,\nabla\theta\rangle\right|&\leq& \frac{1}{2}\left(||\xi||^{2}+||\nabla\theta||^{2}\right).
\end{eqnarray*}
For the terms in $(B,\nabla\cdot\xi)$, we have, using (\ref{estim1})-(\ref{estim3}), the following estimates:
\begin{eqnarray*}
\left|(v\cdot\sigma,\nabla\cdot\xi)\right|&\leq&|v|_{\infty}||\sigma||\left(||\nabla\xi_{1}||+||\nabla\xi_{2}||\right)\\
&\lesssim&N^{-2\mu}+||\xi ||_{1}^{2},\\
\left|(v\cdot\xi,\nabla\cdot\xi)\right|&\lesssim&||\xi||_{1}^{2},\\
\left|(\sigma\cdot\xi,\nabla\cdot\xi)\right|&\lesssim&||\xi||_{1}^{2},\\
\left|\frac{1}{2}(|\sigma|^{2},\nabla\cdot\xi)\right|&\leq&\frac{1}{2}|\sigma|_{\infty}||\sigma||||\xi||_{1}\lesssim N^{-2\mu}+||\xi||_{1}^{2}.
\end{eqnarray*}
On the other hand, from periodicity
\begin{eqnarray*}
\left|\frac{1}{2}(|\xi|^{2},\nabla\cdot\xi)\right|&=&\frac{1}{2}\int_{\Omega}(\xi_{1}^{2}+\xi_{2}^{2})(\partial_{x}\xi_{1}+\partial_{y}\xi_{2})d{\bf x}\\
&=&\frac{1}{2}\left(\int_{0}^{L}\left(\underbrace{\int_{0}^{L}\xi_{1}^{2}\partial_{x}\xi_{1}dx}_{=0}\right)dy+\left(\underbrace{\int_{0}^{L}\xi_{2}^{2}\partial_{y}\xi_{2}dy}_{=0}\right)dx\right)\\
&&+\frac{1}{2}\int_{\Omega}(\xi_{1}^{2}\partial_{y}\xi_{2}+\xi_{2}^{2}\partial_{x}\xi_{1})d{\bf x}=\int_{\Omega}\xi_{1}\xi_{2}(\partial_{y}\xi_{1}+\partial_{x}\xi_{2})d{\bf x}.
\end{eqnarray*}
Then, using for $\xi_{1}$ and $\xi_{2}$ similar arguments to those used for $\theta$ we have, for some $t_{N}$ and $0\leq t\leq t_{N}$
\begin{eqnarray}
|\xi_{j}(t)|_{\infty}\lesssim 1,\; j=1,2.\label{maximal2}
\end{eqnarray}
This implies
\begin{eqnarray*}
\left|\frac{1}{2}(|\xi|^{2},\nabla\cdot\xi)\right|\lesssim |\xi_{1}|_{\infty}||\xi_{2}||_{1}^{2}+ |\xi_{2}|_{\infty}||\xi_{1}||_{1}^{2}\lesssim ||\xi||_{1}^{2}.
\end{eqnarray*}
Therefore, if $0\leq t\leq t_{N}$
\begin{eqnarray}
|(B,\nabla\cdot\xi)|\lesssim N^{-2\mu}+||\xi||_{1}^{2}.\label{BB36c}
\end{eqnarray}
Hence, since $b,d>0$, we have, for $0\leq t\leq t_{N}$
\begin{eqnarray*}
\frac{d}{dt}\left(||\theta||_{1}^{2}+||\xi||_{1}^{2}\right)\lesssim N^{-2\mu}+||\theta||_{1}^{2}+||\xi||_{1}^{2}.
\end{eqnarray*} 
By Gronwall's lemma and (\ref{BB31c}) we have, for $0\leq t\leq t_{N}$ and some constant $C=C(T)$, there holds
\begin{eqnarray}
||\theta||_{1}+||\xi||_{1}\leq C N^{-\mu}.\label{BB36d}
\end{eqnarray}
The usual argument, exposed in e.~g. \cite{DMS2007,DDS0}, proves that $t_{N}$ is not maximal in (\ref{maximal}), (\ref{maximal2}); we may therefore take $t_{N}=T$ and from (\ref{BB36a}), (\ref{BB36b}), and (\ref{BB36c}), then (\ref{BB36d}) holds for $0\leq t\leq T$, leading to (\ref{BB35a}) and, along with (\ref{estim1}), to (\ref{BB35b}).

We now consider the case (C2). Taking $\varphi=\theta, \chi=\xi$ in (\ref{BB34a}), (\ref{BB34b}), integrating by parts,  and after some computations, we have
\begin{eqnarray}
\frac{d}{dt}\left(\frac{||\theta||^{2}}{2}+b\frac{||\nabla\theta||^{2}}{2}\right)-a\langle\nabla(\nabla\cdot\xi,\nabla\theta\rangle&=&r_{1}\langle\xi,\nabla\theta\rangle\nonumber\\
&&+\langle A,\nabla\theta\rangle,\label{BB37a}\\
\frac{d}{dt}\left(\frac{||\xi||^{2}}{2}+d\frac{||\nabla\xi_{1}||^{2}+||\nabla\xi_{2}||^{2}}{2}\right)+c'\langle\nabla\theta,\nabla\nabla\cdot\xi\rangle&=&r_{2}(\theta,\nabla\cdot\xi)\nonumber\\
&&+(B,\nabla\cdot\xi).\label{BB37b}
\end{eqnarray}
We multiply (\ref{BB37a}) by $-c'$, (\ref{BB37b}) by $-a$, and add the resulting equations to obtain
\begin{eqnarray*}
\frac{1}{2}\frac{d}{dt}\left(|c'|||\theta||^{2}+|a|||\xi||^{2}+b|c'|||\nabla\theta||^{2}\right.&&\\
\left.+d|a|\left(||\nabla\xi_{1}||^{2}+||\nabla\xi_{2}||^{2}\right)\right)&=&-c'r_{1}\langle\xi,\nabla\theta\rangle-c'\langle A,\nabla\theta\rangle\\
&&-ar_{2}(\theta,\nabla\cdot\xi)-a(B,\nabla\cdot\xi).
\end{eqnarray*}
Following the same arguments as in the case (C1) the estimates (\ref{BB35a}), (\ref{BB35b}) follow.

We now consider the case (C3). Equation (\ref{BB34a}) is written in the form
\begin{eqnarray}
(I-b\Delta)\theta_{t}=-r_{1}\nabla\cdot\xi-P_{N}(\nabla\cdot A).\label{BB37c}
\end{eqnarray}
Note that the operator $T_{\alpha}=(I-\alpha\Delta)^{-1}, \alpha>0$, is well defined in $H^{j}, j\in\mathbb{R}$: if $f\in H^{j-2}$ then $T_{\alpha}f\in H^{j}$ and, from it Fourier representation, it holds that
\begin{eqnarray}
||T_{\alpha}f||_{j}\lesssim ||f||_{j-2}.\label{BB37d}
\end{eqnarray}
Then we can write (\ref{BB37c}) in the form
\begin{eqnarray*}
\theta_{t}=-r_{1}T_{b}(\nabla\cdot\xi)-T_{b}P_{N}(\nabla\cdot A),
\end{eqnarray*}
and since $\partial_{x},\partial_{y}$ commute with $T_{b}$ and $P_{N}$, from (\ref{BB37d}) there holds
\begin{eqnarray}
||\theta_{t}||_{1}&\leq&|r_{1}|||T_{b}(\nabla\cdot\xi)||_{1}+||T_{b}P_{N}(\nabla\cdot A)||_{1}\nonumber\\
&\lesssim&||\xi||+||A||.\label{BB37dd}
\end{eqnarray}
Now, from (\ref{BB34c}), we have
\begin{eqnarray}
||A||&\lesssim&|{\bf v}|_{\infty}||\rho||+|\zeta|_{\infty}||{\bf v}||+|{\bf v}|_{\infty}||\theta||\nonumber\\
&&+|\zeta|_{\infty}||\xi||+|\sigma|_{\infty}||\theta||+|\rho|_{\infty}||\xi||\nonumber\\
&&+|\rho|_{\infty}||\sigma||+(|\xi_{1}|_{\infty}+|\xi_{2}|_{\infty})||\theta||.\label{BB37e}
\end{eqnarray}
Since $\xi_{1}(0)=\xi_{2}(0)=0$ and using continuity, there is $t_{N}\in (0,T]$ a maximal time for which the solution of the semidiscrete ivp exists and saisfies
\begin{eqnarray}
|\xi_{j}(t)|_{\infty}\leq 1,\quad 0\leq t\leq t_{N}, \; j=1,2.\label{BB37f}
\end{eqnarray}
Then, using (\ref{estim1}), (\ref{estim2}), and (\ref{BB37f}) we have, from (\ref{BB37e}) and for $0\leq t\leq t_{N}$
\begin{eqnarray*}
||A||\lesssim N^{-\mu}+||\theta||+||\xi||.
\end{eqnarray*}
Then, (\ref{BB37dd}) implies that
\begin{eqnarray}
||\theta_{t}||\lesssim N^{-\mu}+||\theta||+||\xi||,\; 0\leq t\leq t_{N}.\label{BB37g}
\end{eqnarray}
Similarly, we write (\ref{BB34b}) as
\begin{eqnarray*}
\xi_{t}=-c'T_{d}\Delta\nabla\theta-r_{2}T_{d}\nabla\theta-T_{d}\widetilde{P}_{N}(\nabla B),
\end{eqnarray*}
(where the operators $T_{d}\Delta, T_{d}$ acts on the components) which, using (\ref{BB37d}), leads to
\begin{eqnarray}
||\xi_{t}||&\leq&|c'|||T_{d}\Delta\nabla\theta||+|r_{2}|||T_{d}\nabla\theta||+||T_{d}\widetilde{P}_{N}(\nabla B)||\nonumber\\
&\lesssim&||\theta||_{1}+||\theta||+||B||.\label{BB37h}
\end{eqnarray}
From (\ref{BB34d}), we have
\begin{eqnarray*}
||B||&\lesssim&|{\bf v}|_{\infty}||\sigma||+|{\bf v}|_{\infty}||\xi||+|\sigma|_{\infty}||\xi||\\
&&+|\sigma|_{\infty}||\sigma||+(|\xi_{1}|_{\infty}+|\xi_{2}|_{\infty})||\xi ||.
\end{eqnarray*}
As above, using (\ref{estim1}), (\ref{estim2}), and (\ref{BB37f}) we have, for $0\leq t\leq t_{N}$
\begin{eqnarray*}
||B||\lesssim N^{-\mu}+||\theta||_{1}+||\xi||,
\end{eqnarray*}
and therefore, from (\ref{BB37h})
\begin{eqnarray}
||\xi_{t}||\lesssim N^{-\mu}+||\theta||_{1}+||\xi||,\; 0\leq t\leq t_{N}.\label{BB37i}
\end{eqnarray}
Thus, (\ref{BB37g}) and (\ref{BB37i}) lead to
\begin{eqnarray*}
||\theta_{t}||_{1}+||\xi_{t}||\lesssim N^{-\mu}+||\theta||_{1}+||\xi||,\; 0\leq t\leq t_{N}.
\end{eqnarray*}
Since $\theta(0)=\xi(0)=0$, Gronwall's lemma implies that
\begin{eqnarray*}
||\theta(t)||_{1}+||\xi(t)||\leq CN^{-\mu},\; 0\leq t\leq t_{N},
\end{eqnarray*}
for some constant $C$, dependent on $T$. Then, since $\mu\geq 1$ and using (\ref{estim3}), we infer, for $N$ large enough, that $t{N}$ is not maximal satisfying (\ref{BB37f}) and we may take $t_{N}=T$ so that
\begin{eqnarray*}
||\theta(t)||_{1}+||\xi(t)||\leq CN^{-\mu},\; 0\leq t\leq T,
\end{eqnarray*}
and from this point, the same reasoning as that in case (C1) can be used to obtain the conclusion. We also note that for the case (C4) we can argue in a similar way to that of (C3). 
\end{proof}

\begin{proposition}
\label{prop33}
Assume that the solution $(\zeta,{\bf v})=(\zeta,v_{1},v_{2})$ of (\ref{BB21})-(\ref{BB22}) satisfies that $\zeta,v_{1},v_{2}\in C^{1}(0,T,H^{\mu}), \mu\geq 1$. Let $a,b,c,d$ be in one of the cases (C5)-(C8). Then, for $\mu>3/2$
\begin{eqnarray}
\max_{0\leq t\leq T}\left(||\zeta^{N}-\zeta||+||{\bf v}^{N}-{\bf v}||_{1}\right)&\lesssim&N^{1-\mu},\label{bb35c}
\end{eqnarray}
\end{proposition}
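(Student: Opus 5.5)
We follow the energy approach of Proposition \ref{prop32}, using the same splitting $\zeta^{N}-\zeta=\theta+\rho$, ${\bf v}^{N}-{\bf v}=\xi+\sigma$ and the error equations (\ref{BB34a}), (\ref{BB34b}). The target is
\begin{eqnarray*}
||\theta||+||\xi||_{1}\lesssim N^{1-\mu},
\end{eqnarray*}
which, combined with (\ref{estim1}) for $\rho$ and $\sigma$, gives (\ref{bb35c}). The loss of one power of $N$ compared with Proposition \ref{prop32} comes from the absence of smoothing in one equation. Because $b=0$ (or $d=0$), we do not control $||\nabla\theta||$ (respectively $||\nabla\xi||$). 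The nonlinear terms $\langle A,\nabla\theta\rangle$ and $(B,\nabla\cdot\xi)$ must therefore be rewritten by moving derivatives onto the exact solution or onto the projection errors. This produces factors such as $||\nabla\rho||\lesssim N^{1-\mu}$.

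\textbf{Cases (C5), (C6), (C8): $b=0$, $d>0$.}
\begin{itemize}
\item In (C5) we take $\varphi=\theta$, $\chi=\xi$ and combine the two equations with weights $-c'$ and $-a$, as in case (C2). This cancels the cross third-order terms $a\langle\nabla\nabla\cdot\xi,\nabla\theta\rangle$ and $c'\langle\nabla\theta,\nabla\nabla\cdot\xi\rangle$. The resulting energy is $|c'|\,||\theta||^{2}+|a|(||\xi||^{2}+d||\nabla\xi||^{2})$, which is equivalent to $||\theta||^{2}+||\xi||_{1}^{2}$.
\item In (C6) and (C8), where $c=0$, we take $\chi=\xi$ and, in the first equation, $\varphi=\theta$. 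In (C6) the term $a(\Delta\nabla\cdot\xi,\theta)$ remains uncancelled. For that case we plan to use the weighted test function $\chi=r_{1}\xi-a\Delta\xi$, mimicking the multiplier $r_{1}\Lambda^{s}{\bf v}+a\Lambda^{s}\Delta{\bf v}$ of Theorem \ref{theo22}. The energy then also contains $|a|\,||\Delta\xi||^{2}$-type terms; we accept an $H^{2}$ energy for $\xi$ and later keep only the $H^{1}$ part.
\end{itemize}

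\textbf{Nonlinear term in the first equation.} Since $\nabla\theta$ is not controlled, we write $\langle A,\nabla\theta\rangle=-(\nabla\cdot A,\theta)$ and expand $\nabla\cdot A$ term by term from (\ref{BB34c}).
\begin{itemize}
\item The terms $\nabla\cdot(\rho{\bf v})$ and $\nabla\cdot(\zeta\sigma)$ give $N^{1-\mu}||\theta||$ by (\ref{estim1}).
\item For $\nabla\cdot(\theta{\bf v})$, integration by parts gives $\frac12\int(\nabla\cdot{\bf v})\theta^{2}$, which is bounded by $||\theta||^{2}$.
\item The term $\nabla\cdot(\zeta\xi)$ is bounded by $||\xi||_{1}$ times norms of the exact solution.
\item The terms with $\rho\xi$, $\theta\sigma$, $\rho\sigma$ need $|\nabla\rho|_{\infty}$ and $|\nabla\sigma|_{\infty}$. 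These are bounded via (\ref{estim2}) applied to derivatives, giving $N^{3/2-\mu}\lesssim 1$ when $\mu>3/2$. This is exactly where the hypothesis $\mu>3/2$ enters.
\item The cubic term $(\nabla\cdot(\theta\xi),\theta)=\frac12\int(\nabla\cdot\xi)\theta^{2}+\dots$ is handled by a bootstrap hypothesis $|\nabla\xi|_{\infty}+|\xi|_{\infty}\le 1$ on a maximal interval $[0,t_{N}]$.
\end{itemize}

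\textbf{Nonlinear term in the second equation.} The term $(B,\nabla\cdot\xi)$ is treated as in (C1), since $d>0$ controls $||\xi||_{1}$. The only change is that the cubic term $(|\xi|^{2},\nabla\cdot\xi)$ uses the same bootstrap bound.

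\textbf{Gronwall and removal of the bootstrap.} Altogether this gives
\begin{eqnarray*}
E'(t)\lesssim N^{2-2\mu}+E(t),
\end{eqnarray*}
where $E$ is the energy above, with $E(0)=0$ by (\ref{BB31c}). Gronwall's lemma then yields $||\theta||+||\xi||_{1}\lesssim N^{1-\mu}$ on $[0,t_{N}]$. To show that $t_{N}$ is not maximal we need the bootstrap bounds to hold strictly. By (\ref{estim3}) and two-dimensional Sobolev/inverse estimates,
\begin{eqnarray*}
|\nabla\xi|_{\infty}\lesssim N\,||\xi||_{1}\lesssim N^{2-\mu}.
\end{eqnarray*}
This is small only if $\mu>2$. \textbf{This is the main obstacle.} To keep only $\mu>3/2$, we would either avoid needing $|\nabla\xi|_{\infty}$ by writing $(\nabla\cdot(\theta\xi),\theta)=\frac12\int\theta^{2}\nabla\cdot\xi$ and bounding it through $|\theta|_{\infty}$ together with $||\theta||$, or use the extra $H^{2}$ control of $\xi$ in the (C6) energy. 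Both bootstrap quantities are then controlled by $N\cdot N^{1-\mu}$ up to logarithmic factors, which is $o(1)$ for $\mu>3/2$ up to the log (the margin is sufficient only for $\mu$ somewhat above $3/2$). I expect this accounting of the $L^{\infty}$ bootstrap to be the delicate step.

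\textbf{Case (C7): $d=0$, $b>0$.} This case is symmetric, with the roles of $\theta$ and $\xi$ exchanged. Note, however, that the stated norm puts $H^{1}$ on ${\bf v}$. We would therefore first get $||\theta||_{1}+||\xi||\lesssim N^{1-\mu}$ by the same procedure. We then recover $||\xi||_{1}$ from the second equation: $\xi_{t}=-r_{2}\nabla\theta-\widetilde{P}_{N}\nabla B$ is integrated in time and estimated in $L^{2}$ after taking one derivative, as in the $\theta_{t}$ estimate (\ref{BB37dd}) of case (C3).
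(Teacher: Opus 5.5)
Your overall scheme matches the paper's: the same splitting, the $-c'$/$-a$ combination in (C5), a Theorem~\ref{theo22}-type multiplier in (C6), integration by parts in $(\nabla\cdot A,\theta)$, and Gronwall plus a bootstrap. In (C6) the multiplier has the wrong sign. It must be $\chi=r_{1}\xi+a\Delta\xi$, or the paper's $\chi=\xi+a\nabla(\nabla\cdot\xi)$, which pairs with $\nabla\theta$ in the same way. With $-a\Delta\xi$ the third-order cross terms double instead of cancelling, and the $\|\nabla\xi\|^{2}$, $\|\Delta\xi\|^{2}$ parts of the energy become negative.

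\textbf{Main gap: the bootstrap is not removed.} This is the step you flag yourself, and it is not closed under $\mu>3/2$. Bootstrapping $|\nabla\xi|_{\infty}$ is unnecessary. The identity $(\nabla\cdot(\theta\xi),\theta)=\frac{1}{2}(\nabla\cdot\xi,\theta^{2})$ is exact, and the paper bounds it by $|\theta|_{\infty}\|\theta\|\,\|\nabla\cdot\xi\|$ under $|\theta|_{\infty}\leq 1$, together with $|\xi_{j}|_{\infty}\lesssim 1$ for $(|\xi|^{2},\nabla\cdot\xi)$. Your fallback, however, gives $|\theta|_{\infty}\lesssim N\cdot N^{1-\mu}=N^{2-\mu}$. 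This tends to zero only for $\mu>2$, logarithms or not, so the claim that it is $o(1)$ for $\mu>3/2$ is false, and as written your argument closes only for $\mu>2$. The paper closes this step with $|\theta|_{\infty}\lesssim N^{1/2}\|\theta\|\lesssim N^{3/2-\mu}$. That single inequality is the source of the threshold $3/2$.

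\textbf{Caveat on the paper's inequality.} That $N^{1/2}$ is the one-dimensional inverse factor, just as $N^{1/2-m}$ in (\ref{estim2}) is the one-dimensional rate. On the two-dimensional space $S_{N}$ the sharp bound is $|\phi|_{\infty}\leq(2N+1)\|\phi\|$, attained by $\phi=\sum_{-N\leq k_{x},k_{y}\leq N}\phi_{\bf k}$. Likewise, the 2D analogue of (\ref{estim2}) carries $N^{1-m}$ (for $m>1$). So your two-dimensional count is the correct one. The inconsistency is that you use it only in the bootstrap. Elsewhere you take $|\nabla\rho|_{\infty},|\nabla\sigma|_{\infty}\lesssim N^{3/2-\mu}$ and $|\nabla\cdot{\bf v}|_{\infty}\lesssim 1$ from the 1D-type estimates, and in two dimensions these also need $\mu>2$ (since $H^{\mu}\hookrightarrow W^{1,\infty}$ requires $\mu>2$). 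Done consistently in 2D, your argument (and the paper's) gives the estimate for $\mu>2$. The stated $\mu>3/2$ is reproduced only by using the paper's inequalities as written.

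\textbf{Case (C7).} Your recovery of $\|\xi\|_{1}$ fails. With $d=0$ there is no $T_{d}$ in $\xi_{t}=-r_{2}\nabla\theta-\widetilde{P}_{N}\nabla B$, so $\nabla\xi_{t}$ contains $r_{2}\nabla\nabla\theta$. You would then need $\|\theta\|_{2}$, which the (C7) energy $\|\theta\|^{2}+b\|\nabla\theta\|^{2}+\|\xi\|^{2}$ does not control, and an inverse inequality costs a full power of $N$. The step (\ref{BB37dd}) worked in (C3) only because $T_{b}$ gains two derivatives. The paper does not address this either: it proves (C8) and calls (C7) similar. What the symmetric energy argument naturally yields is the transposed bound $\|\zeta^{N}-\zeta\|_{1}+\|{\bf v}^{N}-{\bf v}\|\lesssim N^{1-\mu}$.
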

\begin{proof}
We first consider the case (C5). Taking $\varphi=\theta, \chi=\xi$ in (\ref{BB34a}), (\ref{BB34b}), integrating by parts,  and after some computations, we have
\begin{eqnarray}
\frac{d}{dt}\frac{||\theta||^{2}}{2}-a\langle\nabla(\nabla\cdot\xi,\nabla\theta\rangle&=&-r_{1}(\nabla\cdot\xi,\theta)\nonumber\\
&&-(\nabla\cdot A,\theta),\label{BB38a}\\
\frac{d}{dt}\left(\frac{||\xi||^{2}}{2}+d\frac{||\nabla\xi_{1}||^{2}+||\nabla\xi_{2}||^{2}}{2}\right)+c'\langle\nabla\theta,\nabla\nabla\cdot\xi\rangle&=&r_{2}(\theta,\nabla\cdot\xi)\nonumber\\
&&+(B,\nabla\cdot\xi).\label{BB38b}
\end{eqnarray}
We multiply (\ref{BB38a}) by $-c'$, (\ref{BB38b}) by $-a$, and add the resulting equations to obtain
\begin{eqnarray}
\frac{1}{2}\frac{d}{dt}\left(|c'|||\theta||^{2}+|a|||\xi||^{2}+d|a|\left(||\nabla\xi_{1}||^{2}+||\nabla\xi_{2}||^{2}\right)\right)&=&c'r_{1}(\nabla\cdot\xi,\theta)+c'(\nabla\cdot A,\theta)\nonumber\\
&&-ar_{2}(\theta,\nabla\cdot\xi)\nonumber\\
&&-a(B,\nabla\cdot\xi).\label{BB38c}
\end{eqnarray}
We now estimate the right-hand side of (\ref{BB38c}). First we have
\begin{eqnarray*}
\left|(\nabla\cdot\xi,\theta)\right|\leq \frac{1}{2}\left(||\theta||^{2}+||\nabla\cdot\xi||^{2}\right).
\end{eqnarray*}
We use the definition of $A$ in (\ref{BB34c}), the hypotheses on the exact solution,  and (\ref{estim1}) for the following estimates:
\begin{eqnarray*}
\left|(\nabla\cdot(\rho {\bf v}),\theta)\right|&\leq&\left(|\rho|_{\infty}||{\bf v}||_{1}+|{\bf v}|_{\infty}||\rho||_{1}\right)||\theta||\\
&\leq&||\rho||_{1}(||{\bf v}||_{1}+|{\bf v}|_{\infty})||\theta||\\
&\lesssim&||\rho||_{1}||\theta||\lesssim N^{2(1-\mu)}+||\theta||^{2}.
\end{eqnarray*}
Similarly
\begin{eqnarray*}
\left|(\nabla\cdot(\zeta {\bf \sigma}),\theta)\right|&\leq&\left(|\zeta|_{\infty}(||{\sigma_{1}}||_{1}+||\sigma_{2}||_{1})+||\zeta||_{1}(||{\sigma_{1}}||_{1}+||\sigma_{2}||_{1})\right)||\theta||\\
&\lesssim&(||{\sigma_{1}}||_{1}+||\sigma_{2}||_{1})||\theta||\\
&\lesssim& N^{2(1-\mu)}+||\theta||^{2}.
\end{eqnarray*}
And
\begin{eqnarray*}
\left|(\nabla\cdot(\zeta {\bf \xi}),\theta)\right|&\leq&\left(|\zeta|_{\infty}(||{\xi_{1}}||_{1}+||\xi_{2}||_{1})+||\zeta||_{1}(||{\xi_{1}}||_{1}+||\xi_{2}||_{1})\right)||\theta||\\
&\lesssim&(||{\xi_{1}}||_{1}+||\xi_{2}||_{1})||\theta||\\
&\lesssim& ||\xi||_{1}^{2}+||\theta||^{2},\\
\left|(\nabla\cdot(\rho {\bf \xi}),\theta)\right|&\lesssim& ||\xi||_{1}^{2}+||\theta||^{2}.
\end{eqnarray*}

Using integration by parts, we have
\begin{eqnarray*}
(\nabla\cdot(\theta{\bf v}),\theta)=-\langle\theta{\bf v},\nabla\theta\rangle=-\langle {\bf v},\nabla\left(\frac{\theta^{2}}{2}\right)\rangle=(\nabla\cdot{\bf v},\frac{\theta^{2}}{2}).
\end{eqnarray*}
Therefore, since $\mu>3/2$
\begin{eqnarray*}
\left|(\nabla\cdot(\theta{\bf v}),\theta)\right|=\left|(\nabla\cdot{\bf v},\frac{\theta^{2}}{2})\right|\lesssim ||\theta||^{2}.
\end{eqnarray*}
Similarly, from integration by parts, (\ref{estim1})-(\ref{estim3}), and since $\mu>3/2$
\begin{eqnarray*}
\left|(\nabla\cdot(\theta{\bf \sigma}),\theta)\right|&\lesssim&||\theta||^{2},\\
\left|(\nabla\cdot(\rho{\bf \sigma}),\theta)\right|&\lesssim&N^{\frac{3}{2}-2\mu}||\theta||\lesssim N^{-\mu}||\theta||\lesssim N^{-2\mu}+||\theta||^{2}.
\end{eqnarray*}
Since $\theta(0)=0$, using continuity, there is some $t_{N}, 0<t_{N}\leq T$, the maximal value of $t$ for which the solution of (\ref{BB31a})-(\ref{BB31c}) exists and satisfies
\begin{eqnarray}
|\theta(t)|_{\infty}\leq 1,\; 0\leq t\leq t_{N}.\label{BB38d}
\end{eqnarray}
By (\ref{BB38d}), for $0\leq t\leq t_{N}$, it holds that
\begin{eqnarray*}
\left|(\nabla\cdot(\theta{\bf \xi}),\theta)\right|=\left|(\nabla\cdot{\bf \xi},\frac{\theta^{2}}{2})\right|\lesssim|\theta|_{\infty}||\theta||||\nabla\cdot\xi||\lesssim ||\theta||^{2}+||\xi||_{1}^{2}.
\end{eqnarray*}
Therefore, from (\ref{BB34c}) and the previous estimates, we have, for $0\leq t\leq t_{N}$
\begin{eqnarray*}
\left|(\nabla\cdot A,\theta)\right|\lesssim N^{2(1-\mu)}+||\theta||^{2}+||\xi||_{1}^{2}.
\end{eqnarray*}
On the other hand, we can estimate the terms $(\theta,\nabla\cdot\xi)$ and $(B,\nabla\cdot\xi)$ as in Proposition \ref{prop32} to have
\begin{eqnarray*}
\left|(\theta,\nabla\cdot\xi)\right|+\left|(B,\nabla\cdot\xi)\right|\lesssim N^{-2\mu}+||\theta||^{2}+||\xi||_{1}^{2}.
\end{eqnarray*}
We apply the previous estimates to (\ref{BB38c}) leading to, for $0\leq t\leq t_{N}$
\begin{eqnarray*}
\frac{1}{2}\frac{d}{dt}\left(||\theta||^{2}+||\xi||_{1}^{2}\right)\lesssim N^{2(1-\mu)}+||\theta||^{2}+||\xi||_{1}^{2}.
\end{eqnarray*}
From (\ref{BB31c}), Gronwall's lemma applies and it holds that
\begin{eqnarray}
||\theta||+||\xi||_{1}\leq C N^{1-\mu},\label{BB38e}
\end{eqnarray}
for some constant $C=C(T)$. Using (\ref{estim3}) we have
\begin{eqnarray*}
|\theta|_{\infty}\lesssim N^{1/2}||\theta||,
\end{eqnarray*}
and, from (\ref{BB38e}) and since $\mu>3/2$, then $t_{N}$ is not maximal in (\ref{BB38d}) if $N$ is large enough; we may take $t_{N}=T$ and (\ref{BB38e}) leads to (\ref{bb35c}).

We now consider the case (C6). Taking $\varphi=\theta$ in (\ref{BB34a}) and  integrating by parts, we have, while the semidiscrete approximation exists
\begin{eqnarray}
\frac{d}{dt}\frac{||\theta||^{2}}{2}=-r_{1}(\nabla\cdot\xi,\theta)-(\nabla\cdot A,\theta)+a\langle\nabla(\nabla\cdot\xi),\nabla\theta\rangle,\label{BB39a}
\end{eqnarray}
Taking $\chi=\xi+a\nabla(\nabla\cdot\xi)$ in (\ref{BB34b}) and integrating by parts, after some computations
\begin{eqnarray}
\frac{1}{2}\frac{d}{dt}\left(||\xi||^{2}+(|a|+d)(||\nabla\xi_{1}||^{2}+||\nabla\xi_{2}||^{2})\right.&&\nonumber\\
\left.+|a|d||\nabla(\nabla\cdot\xi)||^{2}\right)&=&-r_{1}\langle\nabla\theta,\xi\rangle\label{BB39b}\\
&&-ar_{2}\langle\nabla\theta,\nabla(\nabla\cdot\xi)\rangle\nonumber\\
&&-\langle\nabla B,\xi+a\nabla(\nabla\cdot\xi)\rangle.\nonumber
\end{eqnarray}

We multiply (\ref{BB39a}) by $r_{2}$ and add to (\ref{BB39b}) to obtain
\begin{eqnarray}
\frac{1}{2}\frac{d}{dt}\left(r_{2}||\theta||^{2}+||\xi||^{2}+(|a|+d)||\nabla\xi||^{2}\right.&&\nonumber\\
\left.+|a|d||\nabla(\nabla\cdot\xi)||^{2}\right)&=&-r_{1}r_{2}(\nabla\cdot\xi,\theta)-r_{2}(\nabla\cdot A,\theta)\nonumber\\
&&-r_{2}\langle\nabla\theta,\xi\rangle\nonumber\\
&&-\langle\nabla B,\xi+a\nabla(\nabla\cdot\xi)\rangle.\label{BB39c}
\end{eqnarray}
Let $t_{N}$ be the maximal time in $(0,T]$ for which the semidiscrete approximation exists and satisfies (\ref{BB38d}) and (\ref{BB37f}). The terms $(\nabla\cdot\xi,\theta)$ and $(\nabla\cdot A,\theta)$ are estimated as in the case (C5) and therefore, for $0\leq t\leq t_{N}$
\begin{eqnarray*}
\left|(\nabla\cdot\xi,\theta)\right|+\left|(\nabla\cdot A,\theta)\right|\lesssim N^{2(1-\mu)}+||\theta||^{2}+||\xi||_{1}^{2}.
\end{eqnarray*}
On the other hand, notice that
\begin{eqnarray*}
\left|\langle\nabla\theta,\xi\rangle\right|=\left|(\theta,\nabla\cdot\xi)\right|\leq \frac{1}{2}\left(||\theta||^{2}+||\xi||_{1}^{2}\right).
\end{eqnarray*}
Note also that as, in Proposition \ref{prop32} we have
\begin{eqnarray*}
\left|(\theta,\nabla\cdot\xi)\right|+\left|(B,\nabla\cdot\xi)\right|\lesssim N^{-2\mu}+||\theta||^{2}+||\xi||_{1}^{2}.
\end{eqnarray*}
We now estimate the terms in 
\begin{eqnarray*}
\langle\nabla B,\nabla(\nabla\cdot\xi)\rangle=\frac{\lambda}{2}\langle\nabla({\bf v}\cdot\sigma+{\bf v}\cdot\xi+\sigma\cdot\xi+\frac{|\xi|^{2}+|\sigma|^{2}}{2},\nabla(\nabla\cdot\xi)\rangle
\end{eqnarray*}
Using integration by parts, (\ref{estim1}), and since $\mu>3/2$, we have:
\begin{eqnarray*}
\left|\langle\nabla({\bf v}\cdot\sigma),\nabla(\nabla\cdot\xi)\rangle\right|&\lesssim& N^{2(1-\mu)}+||\nabla(\nabla\cdot\xi||^{2}.\\
\left|\langle({\bf v}\cdot\xi),\nabla(\nabla\cdot\xi)\rangle\right|&\lesssim&||\xi||_{1}||\nabla(\nabla\cdot\xi||\lesssim ||\xi||_{2}^{2}.\\
\left|\langle{\bf \sigma}\cdot\xi,\nabla(\nabla\cdot\xi)\rangle\right|&\lesssim&||\xi||_{1}||\nabla(\nabla\cdot\xi||\lesssim ||\xi||_{2}^{2}.\\
\left|\langle\nabla |{\bf v}|^{2},\nabla(\nabla\cdot\xi)\rangle\right|&\lesssim&N^{\frac{3}{2}-2\mu}||\nabla(\nabla\cdot\xi||\lesssim N^{-2\mu}+||\nabla(\nabla\cdot\xi||^{2}.
\end{eqnarray*}
Using (\ref{BB37f}), for $0\leq t\leq t_{N}$, we have
\begin{eqnarray*}
\left|\langle\nabla |{\bf \xi}|^{2},\nabla(\nabla\cdot\xi)\rangle\right|\lesssim (|\xi_{1}|_{\infty}+|\xi_{2}|_{\infty})||\nabla\xi||||\nabla(\nabla\cdot\xi)||^{2}\lesssim ||\xi||_{2}^{2}.
\end{eqnarray*}
Now we apply the previous estimates to (\ref{BB39c}) yielding, for $0\leq t\leq t_{N}$
\begin{eqnarray*}
\frac{d}{dt}\left(||\theta||^{2}+||\xi||_{2}^{2}\right)\lesssim N^{2(1-\mu)}+||\theta||^{2}+||\xi||_{2}^{2}.
\end{eqnarray*}
Gronwall's lemma and (\ref{BB31c}) imply, for $0\leq t\leq t_{N}$
\begin{eqnarray}
||\theta||+||\xi||_{2}\leq C N^{(1-\mu)},\label{BB39d}
\end{eqnarray}
for some constant $C=C(T)$. From (\ref{estim3}) and since $\mu>3/2$, then $t_{N}$ is not maximal in (\ref{BB38d}) and (\ref{BB37f}). Then we may take $t_{N}=T$, (\ref{BB39d}) holds for $0\leq t\leq T$, leading, as before, the estimate (\ref{bb35c}).

The cases (C7) and (C8) can be proved using similar arguments. With no loss of generality we assume (C8). We take $\varphi=\theta, \chi=\xi$ in (\ref{BB34a}), (\ref{BB34b}), integrate by parts, and add the resulting equations to have
\begin{eqnarray}
\frac{1}{2}\frac{d}{dt}\left(||\theta||^{2}+||\xi||^{2}+d\left(||\nabla\xi_{1}||^{2}+||\nabla\xi_{2}||^{2}\right)\right)&=&-r_{1}(\nabla\cdot\xi,\theta)-(\nabla\cdot A,\theta)\nonumber\\
&&+r_{2}(\theta,\nabla\cdot\xi)\nonumber\\
&&+(B,\nabla\cdot\xi).\label{BB310a}
\end{eqnarray}
Each term on the right-hand side of (\ref{BB310a}) can be estimated as in case (C5), and the arguments there imply (\ref{bb35c}).
\end{proof}
\section{Numerical experiments}
\label{sec4}
In order to illustrate and complete the numerical analysis performed in section \ref{sec3}, some numerical experiments are here presented. They are focused on the following points: (i) The introduction of scheme for the time integration of the semidiscrete system (\ref{BB31a}), (\ref{BB31b}); (ii) The validation of the full discretization and illustration of the estimates derived in Propositions \ref{prop32} and \ref{prop33}; (iii) The simulation of line solitary wave solutions of some of the B/B systems.
\subsection{Full discretization}
For purposes of implementation, the periodic ivp (\ref{BB31a})-(\ref{BB31c}) will will be posed on an interval $\Omega_{L}=(-L,L)^{2}$. The corresponding semidiscrete system, after the spectral formulation, will have a Fourier representation of the form, cf. (\ref{BB32a})-(\ref{BB32c})
\begin{eqnarray}
\frac{d}{dt}\begin{pmatrix}\widehat{\zeta^{N}}\\\widehat{{\bf v}^{N}}\end{pmatrix}(\widetilde{\bf k},t)&=&f(\widehat{\zeta^{N}},\widehat{{\bf v}^{N}},\widetilde{\bf k},t)\label{BB41a}\\
&=&-i|{\widetilde{\bf k}}|\mathcal{A}({\widetilde{\bf k}})+F(\widehat{\zeta^{N}},\widehat{{\bf v}^{N}},\widetilde{\bf k},t)\nonumber
\end{eqnarray}
where $\widetilde{{\bf k}}=(\widetilde{k}_{x},\widetilde{k}_{y})=\frac{\pi}{L}{\bf k}$, ${\bf k}=(k_{x},k_{y}), -N\leq k_{x},k_{y}\leq N$, 
$\mathcal{A}, F$ given by (\ref{BB24}), and initial conditions
\begin{eqnarray}
\widehat{\zeta^{N}}(\widetilde{\bf k},0)=\widehat{\zeta_{0}}(\widetilde{\bf k}),\quad \widehat{{\bf v}^{N}}(\widetilde{\bf k},0)=\widehat{{\bf v}_{0}}(\widetilde{\bf k}).\label{BB42c}
\end{eqnarray}
The ivp (\ref{BB41a}), (\ref{BB42c}) is approximated at times $t_{m}=m\Delta t$, $m=0,\ldots,M$, with step size $\Delta t$, up to a final time $T=M\Delta t$, by the implicit midpoint rule (IMR), which is second-order accurate and has well-known stability and geometric properties (like symplecticity and symmetry) guaranteeing its efficiency for long time simulations. The implementation of the full discretization is made as follows. First, after some change of interval, the Fourier Galerkin approximation is formulated in collocation form in $\Omega_{L}$. This means that $(\zeta^{N},{\bf v}^{N})$ in (\ref{BB41a}) is represented by the values at some uniform grid $(x_{j},y_{k})$ of collocation points $x_{j}=-L+jh, y_{k}=-L+kh, j,k=0,\ldots,N-1$. The matrices
\begin{eqnarray*}
W^{N}(t)=\begin{pmatrix}Z^{N}(t)\\{\bf V}^{N}(t)\end{pmatrix}=\begin{pmatrix}\zeta^{N}(x_{j},y_{k},t)\\v_{1}^{N}(x_{j},y_{k},t)\\v_{2}^{N}(x_{j},y_{k},t)\end{pmatrix}_{j,k=0}^{N-1},
\end{eqnarray*} 
will satisfy the ode system $\frac{d}{dt}W^{N}=f(W^{N})$, that is
\begin{eqnarray}
(I+b\Delta_{N})\frac{d}{dt}Z^{N}+r_{1}\nabla_{N}\cdot{\bf V}^{N}+\lambda\nabla_{N}\cdot(Z^{N}.{\bf V}^{N})&&\nonumber\\
+a\Delta_{N}\nabla_{N}\cdot{\bf V}^{N}&=&0,\nonumber\\
(I+d\Delta_{N})\frac{d}{dt}{\bf V}^{N}+r_{2}\nabla_{N}Z^{N}+\frac{\lambda}{2}\nabla_{N}|{\bf V}^{N}|^{2}&&\nonumber\\
+c\Delta_{N}\nabla_{N}Z^{N}&=&0,\label{*2}
\end{eqnarray}
where
\begin{eqnarray*}
\Delta_{N}=D_{N,x}^{2}+D_{N,y}^{2},\; \nabla_{N}=\begin{pmatrix}D_{N,x}\\D_{N,y}\end{pmatrix},
\end{eqnarray*}
being $D_{N,j}$ the $N\times N$ Fourier pseudospectral differentiation matrix in the $j$ direction, $j=x,y$, and if ${\bf V}^{N}=\begin{pmatrix}V_{1}^{N}\\V_{2}^{N}\end{pmatrix}$
\begin{eqnarray*}
Z^{N}.{\bf V}^{N}=\begin{pmatrix}Z^{N}.V_{1}^{N}\\Z^{N}.V_{2}^{N}\end{pmatrix},\;
|{\bf V}^{N}|^{2}=|V_{1}^{N}|^{2}+|V_{2}^{N}|^{2},
\end{eqnarray*}
with the dot operation in Hadamard sense. The system (\ref{*2}) is then integrated numerically by the IMR with the formulas
\begin{eqnarray}
W^{*}=W^{n}+\frac{\Delta t}{2}f(W^{N}),\quad W^{n+1}=2W^{*}-W^{n},\label{*3}
\end{eqnarray}
where $W^{n}$ approximates $W^{N}(t_{n}), n=0,\ldots,M$. The implementation is made in the Fourier space for the $2D$ discrete Fourier coefficients of $W^{n}$ (which allows to extend the formulation to different partitions for $x$ and $y$) and the classical fixed point iteration is used to solve numerically the internal stage in (\ref{*3})

\subsection{Numerical experiments}
\label{sec42}
The performance of the fully discrete method will be here illustrated with some numerical experiments. For simplicity, we will focus on the BBM-BBM case ($b=d>0, a=c=0$) and the computations will be divided in several groups of experiments, concerning:
\begin{itemize}
\item The validation and performance of the full discretization.
\item The dynamics of the internal wave system from initially localized, zero-velocity pulses.
\item The generation and resolution of line solitary waves.
\end{itemize}
Some of the results can be compared with similar experiments performed in \cite{DMS2007} for the case of surface wave propagation and using different boundary conditions.

The first group of experiments is a comparison with exact solutions of (\ref{BB21}) in the BBM-BBM case, given by the solitary waves of the form (cf. \cite{DMS2007})
\begin{eqnarray}
\zeta(x,y,t)&=&A_{1}{\rm sech}^{2}(A(x-c_{s}t-x_{0}))+A_{2}{\rm sech}^{4}(A(x-c_{s}t-x_{0})),\nonumber\\
v_{1}(x,y,t)&=&B_{1}{\rm sech}^{2}(A(x-c_{s}t-x_{0})),\nonumber\\
v_{2}(x,y,t)&=&0,\label{BB43}
\end{eqnarray}
where, for $c_{s}\neq 0$, and after some computations, the constants $A,A_{1},A_{2},B$ satisfy
\begin{eqnarray*}
&&c_{s}^{2}(1-4bA^{2})^{2}=r_{1}r_{2},\quad B_{1}=\frac{20bc_{s}}{\lambda}A^{2},\\
&&A_{2}=\frac{B_{1}}{2r_{2}}(12bc_{s}A^{2}-\lambda B_{1}),\quad A_{1}=\frac{c_{s}A_{2}(1-16bA^{2})}{\lambda B_{1}-6bc_{s}A^{2}}.
\end{eqnarray*}
Due to the exponential decay of (\ref{BB43}) as $|x|\rightarrow\infty$ we may consider an interval with large enough $L$ and approximate the ivp for (\ref{BB1}) by the periodic ivp (\ref{BB21}) in $\Omega_{L}$ with initial data for (\ref{BB22}) given by (\ref{BB43}) at $t=0$. The periodic version of (\ref{BB43}), \cite{BChen}, is solution of (\ref{BB21}) and in this sense the fully discrete solution can be considered as approximation of (\ref{BB43}) in $\Omega_{L}$, being possible to check the accuracy through the corresponding errors. This is illustrated in Tables \ref{tav1} and \ref{tav2}, corresponding to the $L^{2}$ and $L^{\infty}$ norms, respectively, for a solution (\ref{BB43})  with $c_{s}=5/2$ at final time $T=5$, in the case  $\gamma=0.5,\delta=0.9$ and taking $L=16, N=256$.
\begin{table}[htbp]
\begin{tabular}{c|c|c|c|c|}
$\Delta t$& $\zeta$-Error&Rate&$v_{1}$-Error&Rate\\
\hline
$2.5\times 10^{-2}$&$2.9955\times 10^{-1}$&&$2.8148\times 10^{-1}$&\\
$1.25\times 10^{-2}$&$7.5015\times 10^{-2}$&$1.997$&$7.0542\times 10^{-2}$&$1.997$\\
$6.25\times 10^{-3}$&$1.8778\times 10^{-2}$&$1.998$&$1.7660\times 10^{-3}$&$1.998$\\
$3.125\times 10^{-3}$&$4.7571\times 10^{-3}$&$1.981$&$4.4767\times 10^{-3}$&$1.980$\\
\hline
\end{tabular}
\caption{$L^{2}$ errors at $T=5$ and temporal convergence rates for the method (\ref{*3}) with respect to (\ref{BB43}) with $c_{s}=5/2$, $x_{0}=-10$, $\gamma=0.5,\delta=0.9$, $L=16, N=256$.\label{tav1}}
\end{table}

\begin{table}[htbp]
\begin{tabular}{c|c|c|c|c|}
$\Delta t$& $\zeta$-Error&Rate&$v_{1}$-Error&Rate\\
\hline
$2.5\times 10^{-2}$&$5.2223\times 10^{-2}$&&$3.9235\times 10^{-2}$&\\
$1.25\times 10^{-2}$&$1.3049\times 10^{-2}$&$2.000$&$9.8321\times 10^{-3}$&$1.997$\\
$6.25\times 10^{-3}$&$3.2542\times 10^{-3}$&$2.004$&$2.4596\times 10^{-3}$&$1.999$\\
$3.125\times 10^{-3}$&$8.0540\times 10^{-4}$&$2.015$&$6.1509\times 10^{-4}$&$2.000$\\
\hline
\end{tabular}
\caption{$L^{\infty}$ errors at $T=5$ and temporal convergence rates for the method (\ref{*3}) with respect to (\ref{BB43}) with $c_{s}=5/2$, $x_{0}=-10$, $\gamma=0.5,\delta=0.9$, $L=16, N=256$.\label{tav2}}
\end{table}
The corresponding rates of convergence show the temporal order; due to the regularity of the solution, spectral convergence in space is attained. Figure \ref{BBFig1} represents the $\zeta$ component of the numerical solution at times $t=1,4,8$ while Figure \ref{BBFig2}(a) shows the corresponding cross sections at $y=0$.
\begin{figure}[htbp]
\centering
\subfigure
{\includegraphics[width=0.45\columnwidth]{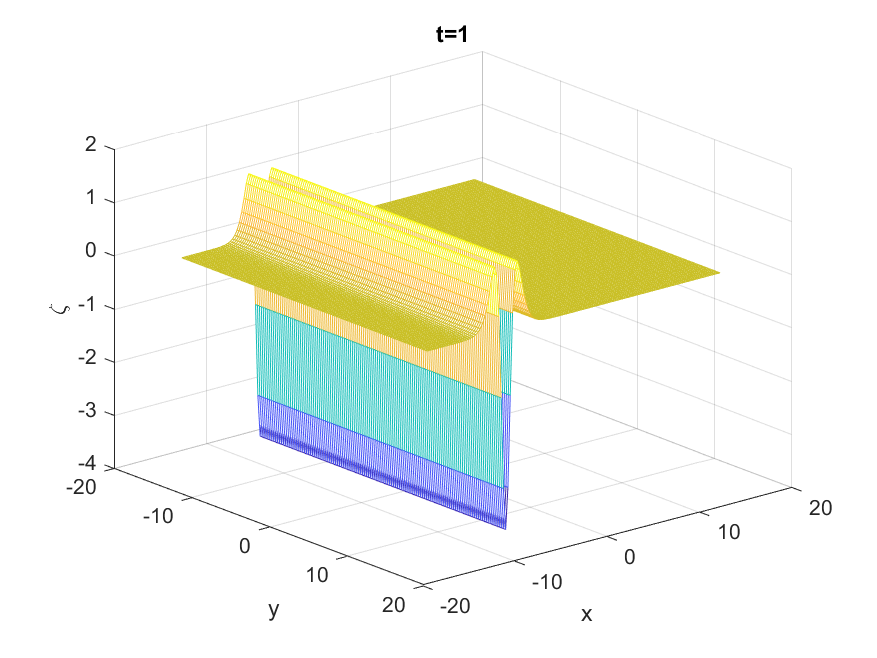}}
\subfigure
{\includegraphics[width=0.45\columnwidth]{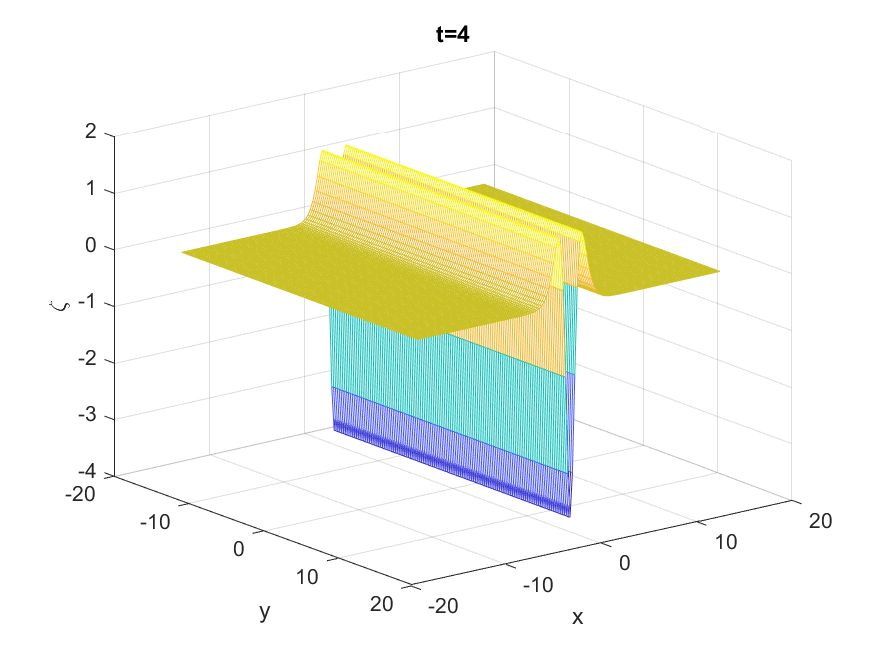}}
\subfigure
{\includegraphics[width=0.45\columnwidth]{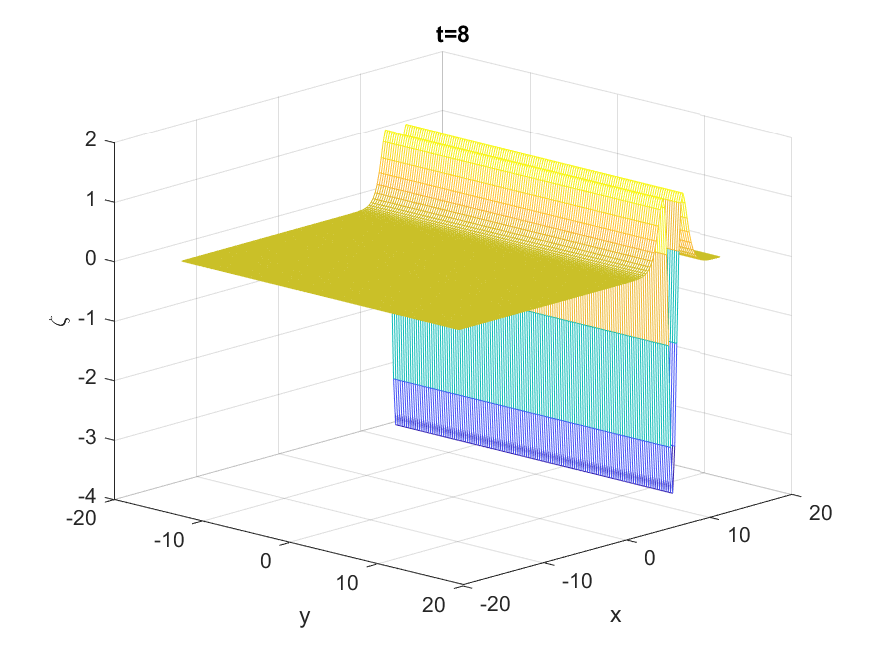}}
\caption{$\zeta$ component of the numerical solution at times $t=1,4,8$.}
\label{BBFig1}
\end{figure}
\begin{figure}[htbp]
\centering
\subfigure[]
{\includegraphics[width=0.45\columnwidth]{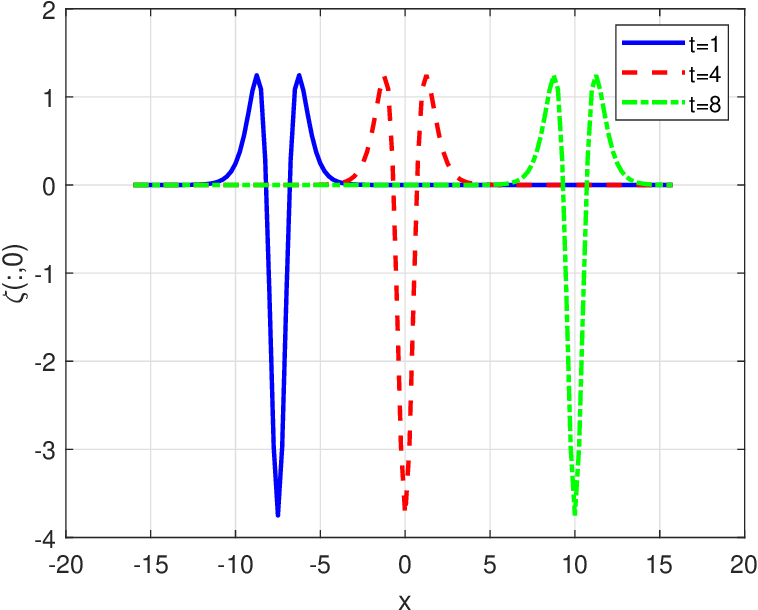}}
\subfigure[]
{\includegraphics[width=0.45\columnwidth]{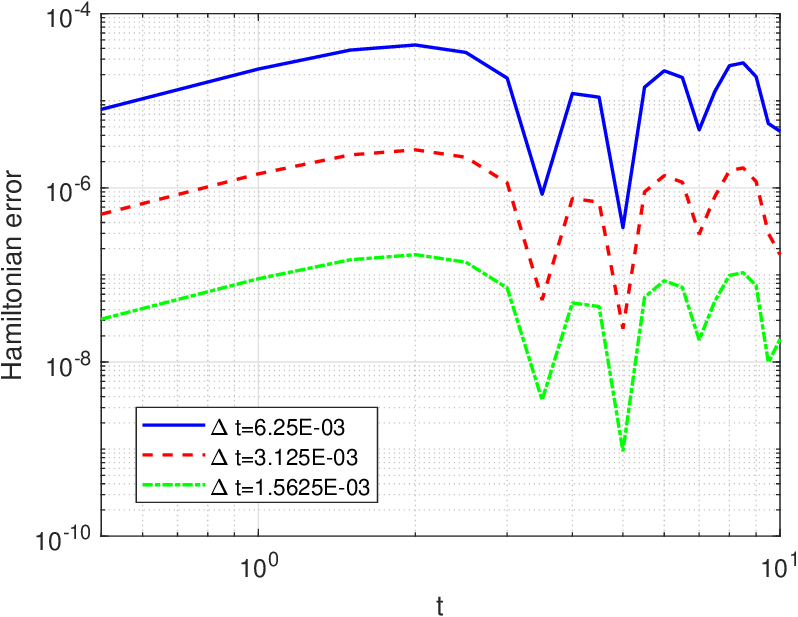}}
\caption{(a) $\zeta$ component of the numerical solution at times $t=1,4,8$ and $y=0$; (b) Time behaviour of the Hamiltonian error for several time steps in loglog scale.}
\label{BBFig2}
\end{figure}
A second indication of accuracy is given by Figure \ref{BBFig2}(b), which shows the evolution of the error in the Hamiltonian (\ref{Ham}) (measured by the corresponding discrete version implemented with quadrature rules, the collocation representation of the numerical solution and FFT techniques). The results obtained from different time steps show that error behaves like the order of the temporal discretization $O(\Delta t^{2})$ and it is bounded up to the final time of simulation, as consequence of the geometric properties of the time discretization.

The accuracy of the numerical method, checked in the previous numerical experiments, will be used here to illustrate, by computational means, the dynamics of (\ref{BB1}) from the evolution of initially localized waves of initial zero velocity. The results can be compared with those from similar experiments performed in \cite{DMS2007} for two models of free surface ($b=d>0, a=c=0$ and $b=d>0, a=0, c<0$) and using reflective boundary conditions (of homogeneous Neumann type for $\zeta$ and homogeneous Dirichlet for ${\bf v}$). In our case, we will fix again $\gamma=0.5, \delta=0.9$, focus on the BBM-BBM system, and consider a long enough interval with $L=64$ in order to minimize the influence of the boundary conditions.

The first experiments of this group study computationally the dynamics from initial Gaussian pulses. Figure \ref{BBFig3} shows the evolution of the $\zeta$ component of the numerical solution obtained from the initial condition
\begin{eqnarray}
\zeta_{0}(x,y)&=&0.1 e^{-(x^{2}+y^{2})/5},\nonumber\\
v_{1}^{0}(x,y)&=&v_{2}^{0}(x,y)=0.\label{BB44}
\end{eqnarray}
As mentioned in \cite{DMS2007}, the solution will satisfy, for all $t$
$$\zeta(x,y,t)=\zeta(-x-y,t),\; v_{1}(x,y,t)=-v_{1}(-x,-y,t),\;
v_{2}(x,y,t)=-v_{2}(-x,-y,t).$$ 
\begin{figure}[htbp]
\centering
\subfigure
{\includegraphics[width=0.45\columnwidth]{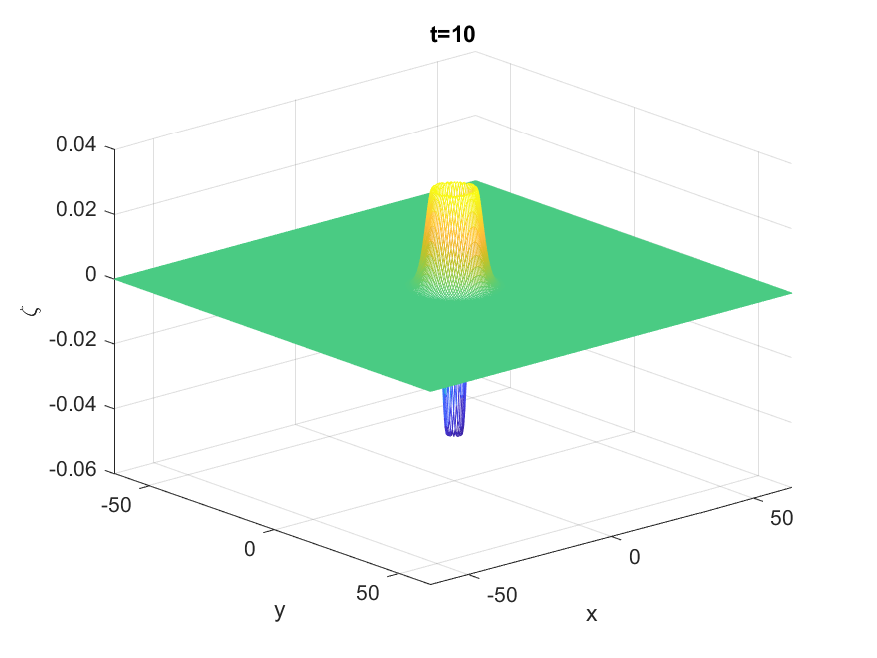}}
\subfigure
{\includegraphics[width=0.45\columnwidth]{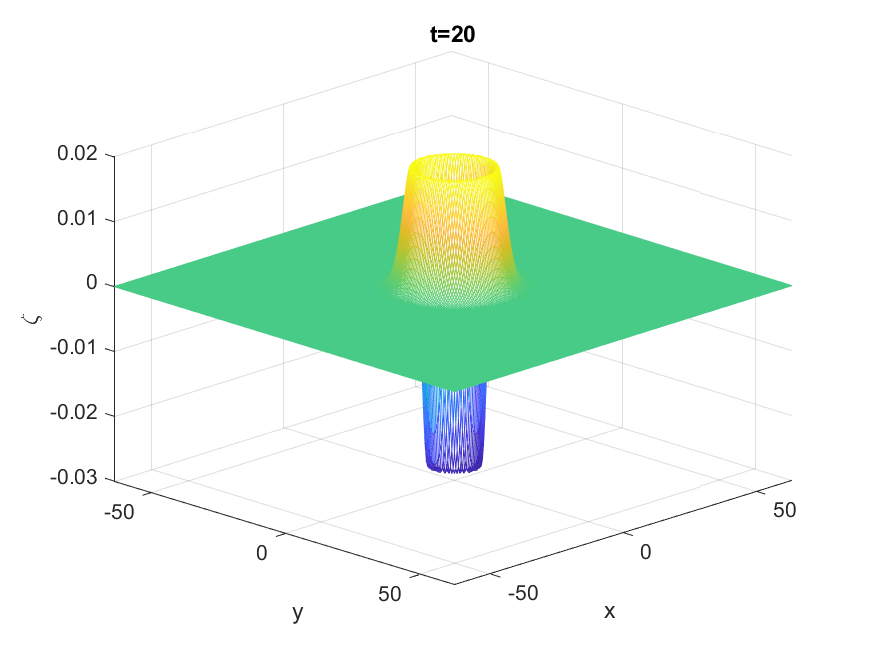}}
\subfigure
{\includegraphics[width=0.45\columnwidth]{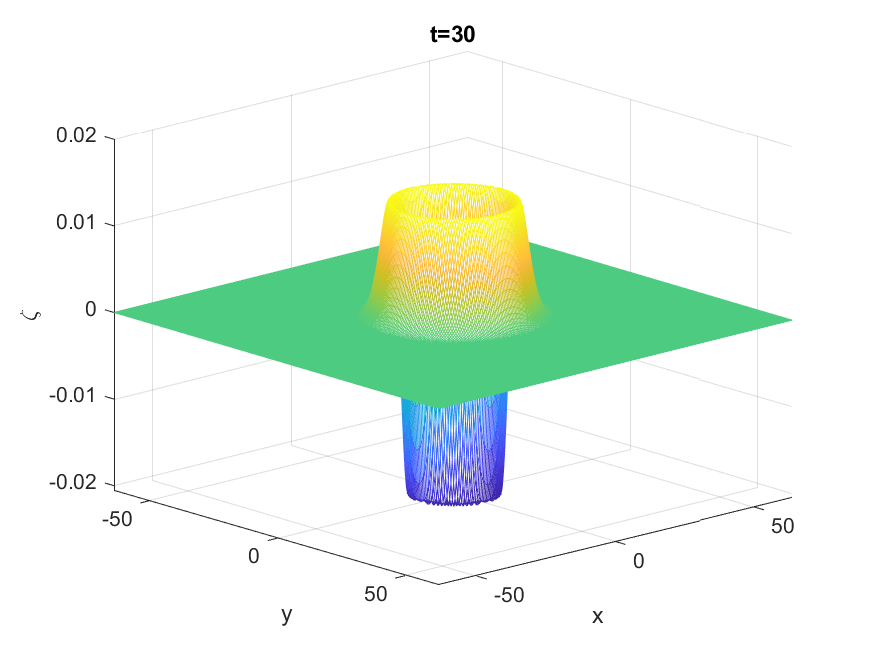}}
\subfigure
{\includegraphics[width=0.45\columnwidth]{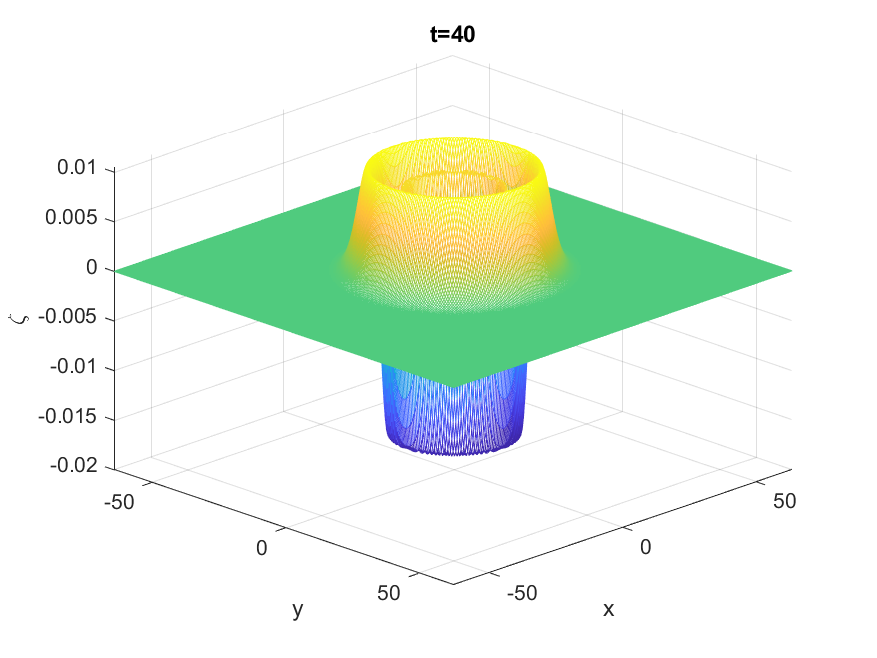}}
\caption{$\zeta$ component of the numerical solution from (\ref{BB44}) at times $t=10,20,30,40$.}
\label{BBFig3}
\end{figure}
\begin{figure}[htbp]
\centering
\subfigure
{\includegraphics[width=0.45\textwidth]{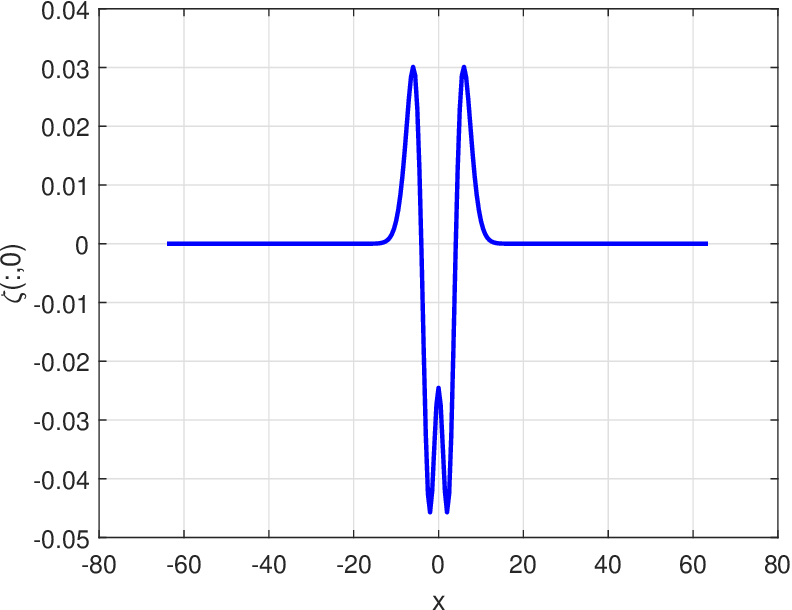}}
\subfigure
{\includegraphics[width=0.45\textwidth]{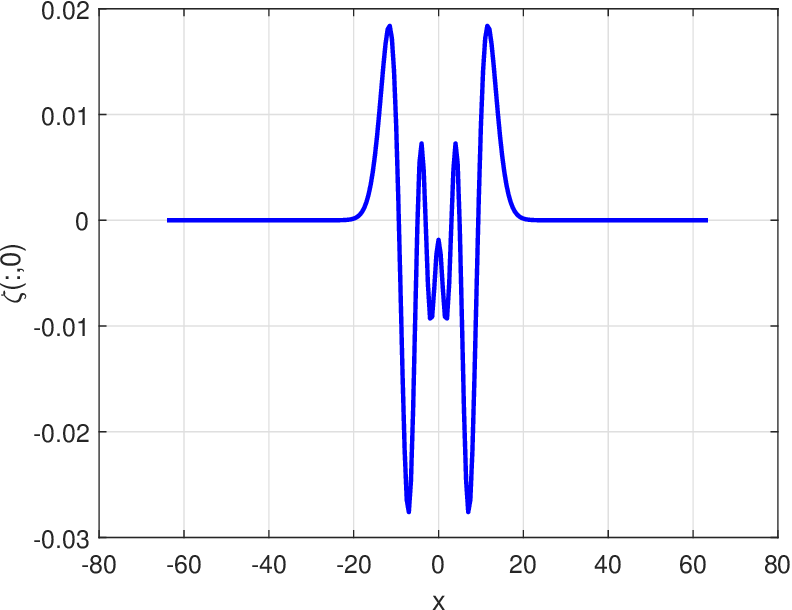}}
\subfigure
{\includegraphics[width=0.45\textwidth]{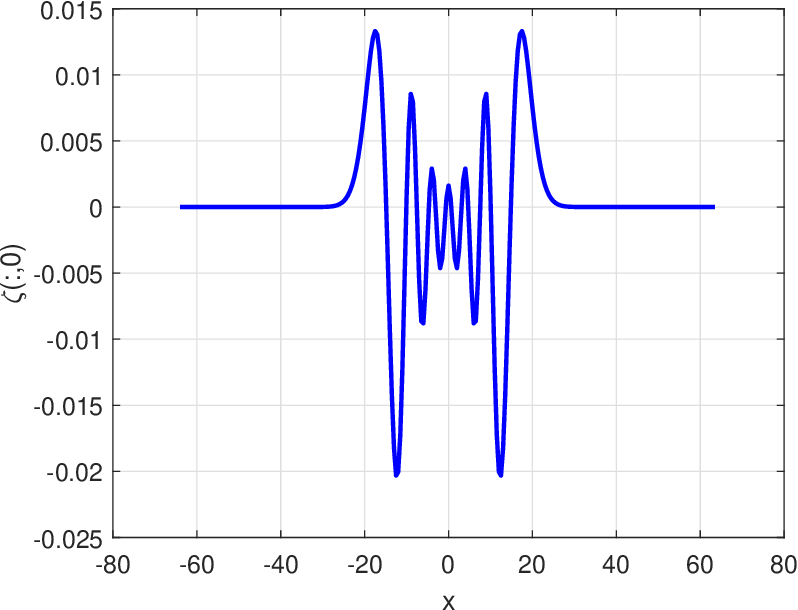}}
\subfigure
{\includegraphics[width=0.45\textwidth]{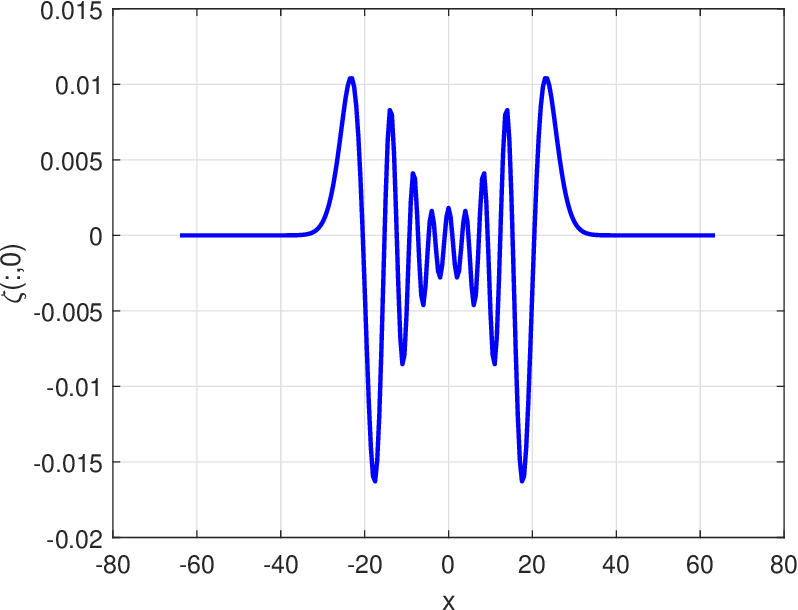}}
\caption{$\zeta$ component of the numerical solution from (\ref{BB44}) at times $t=10,20,30,40$ and $y=0$.}
\label{BBFig4}
\end{figure}
From Figure \ref{BBFig3} we observe the symmetric expansion of the wave in a dispersive, oscillatory way. This is also suggested in Figure \ref{BBFig4}, which shows the one-dimensional cross section of the $\zeta$ component of the numerical solution at $y=0$.

When the initial data is not symmetric, a different behaviour may be observed. Figure \ref{BBFig5} shows the evolution of the $\zeta$ component of the numerical solution of  (\ref{*3}) from the initial condition
\begin{eqnarray}
\zeta_{0}(x,y)&=&0.4 e^{-(x^{2}/5+y^{2}/25)},\nonumber\\
v_{1}^{0}(x,y)&=&v_{2}^{0}(x,y)=0,\label{BB45}
\end{eqnarray}
while cross-sections at $y=0$ evolves as observed in Figure \ref{BBFig6}. They suggest the formation of a bi-directional $N$-waveform (cf. \cite{DMS2007}) plus small-amplitude dispersive tails behind each one.
\begin{figure}[htbp]
\centering
\subfigure
{\includegraphics[width=0.45\columnwidth]{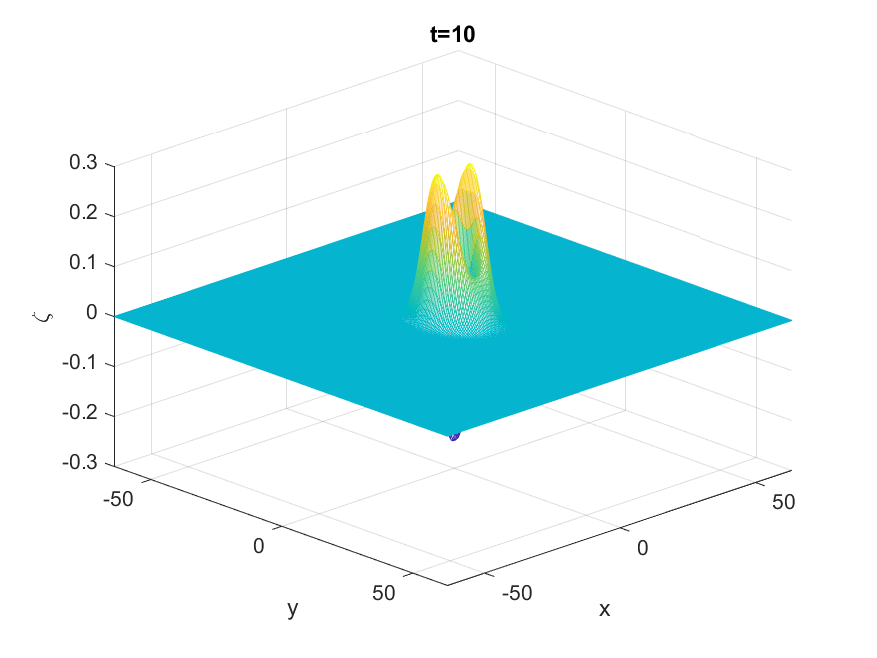}}
\subfigure
{\includegraphics[width=0.45\columnwidth]{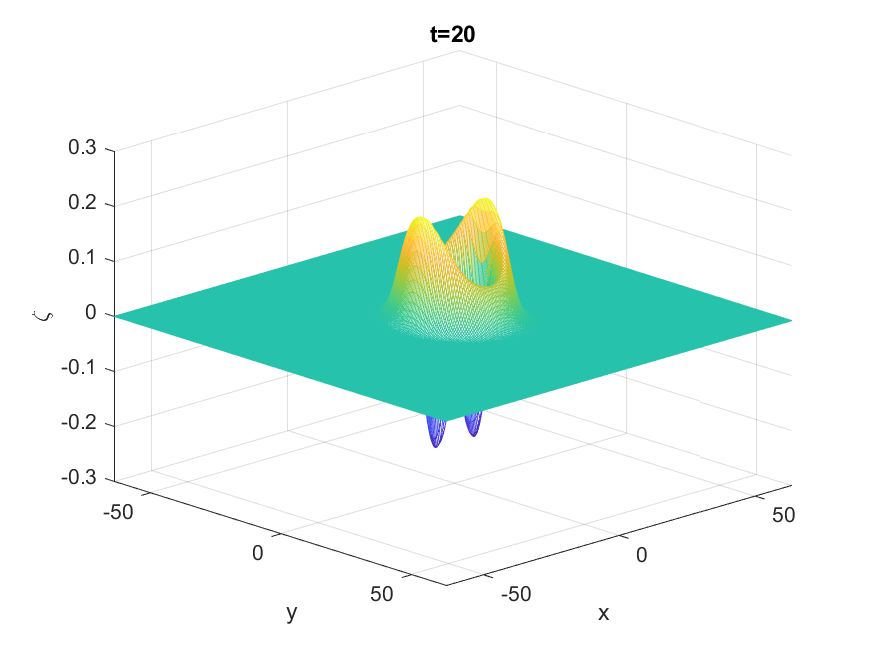}}
\subfigure
{\includegraphics[width=0.45\columnwidth]{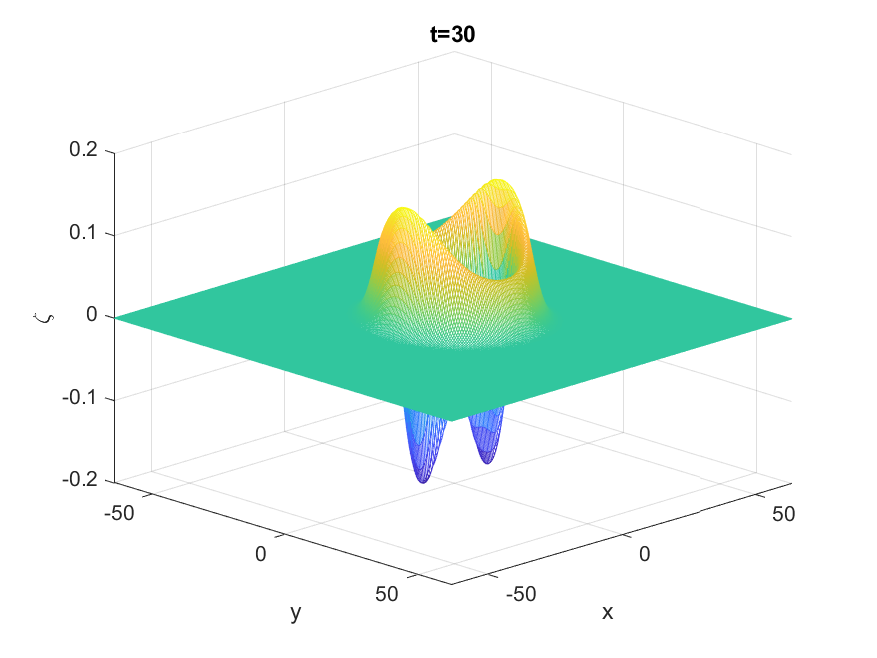}}
\subfigure
{\includegraphics[width=0.45\columnwidth]{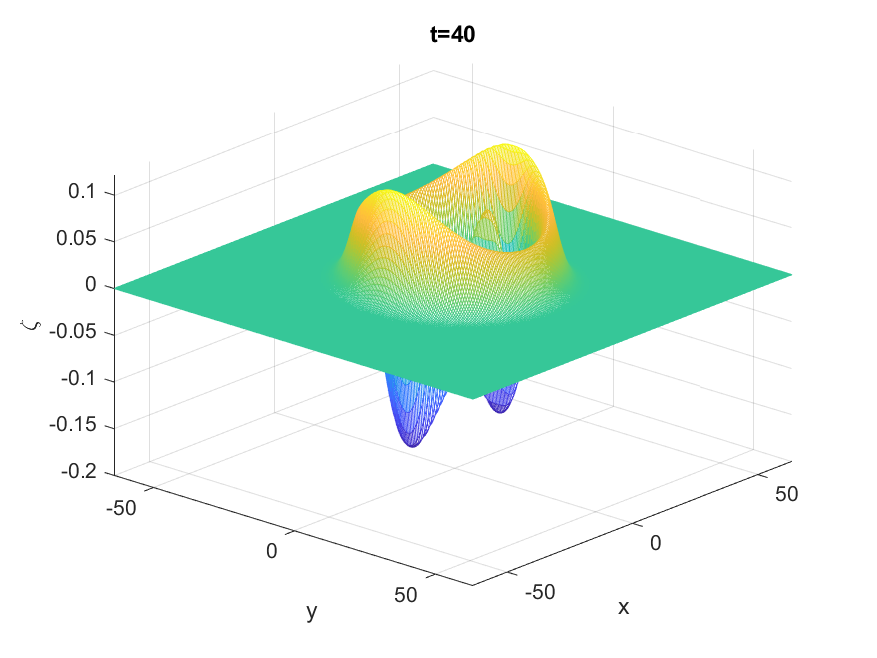}}
\caption{$\zeta$ component of the numerical solution from (\ref{BB45}) at times $t=10,20,30,40$.}
\label{BBFig5}
\end{figure}
\begin{figure}[htbp]
\centering
\subfigure
{\includegraphics[width=0.45\textwidth]{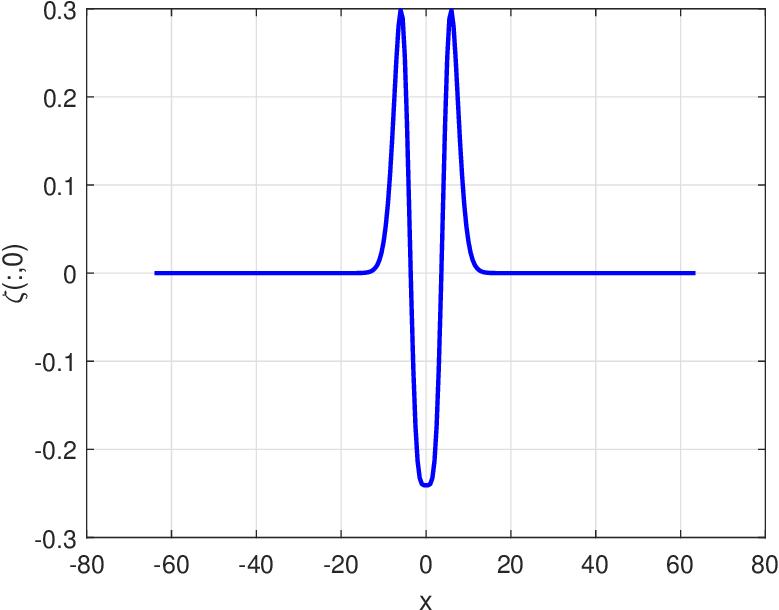}}
\subfigure
{\includegraphics[width=0.45\textwidth]{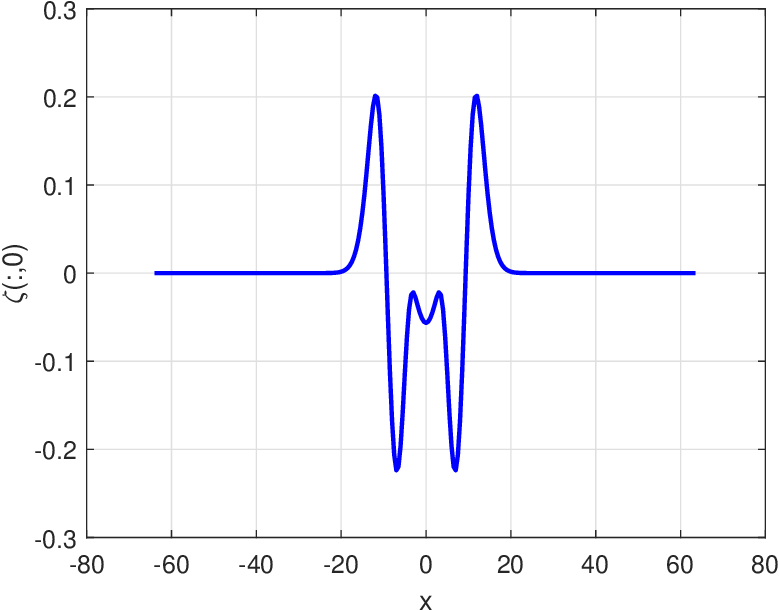}}
\subfigure
{\includegraphics[width=0.45\textwidth]{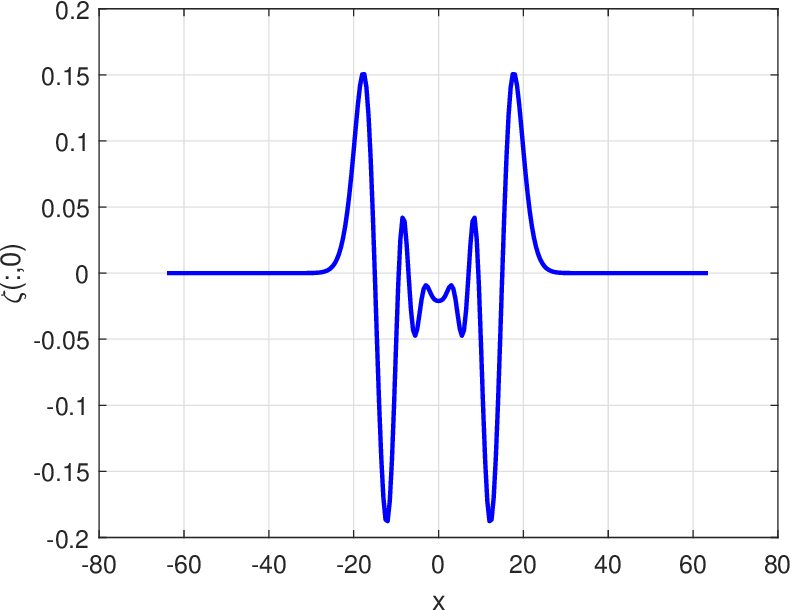}}
\subfigure
{\includegraphics[width=0.45\textwidth]{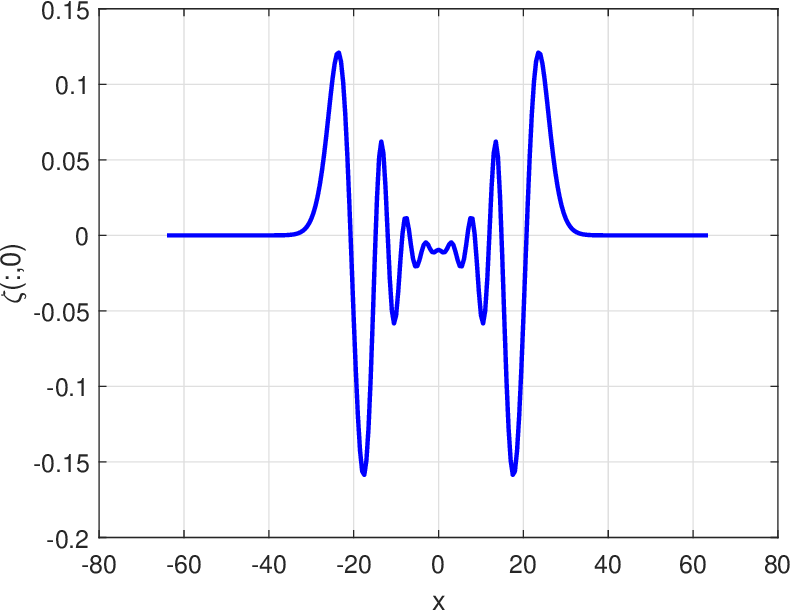}}
\caption{$\zeta$ component of the numerical solution from (\ref{BB45}) at times $t=10,20,30,40$ and $y=0$.}
\label{BBFig6}
\end{figure}
\begin{figure}[htbp]
\centering
\subfigure
{\includegraphics[width=0.45\columnwidth]{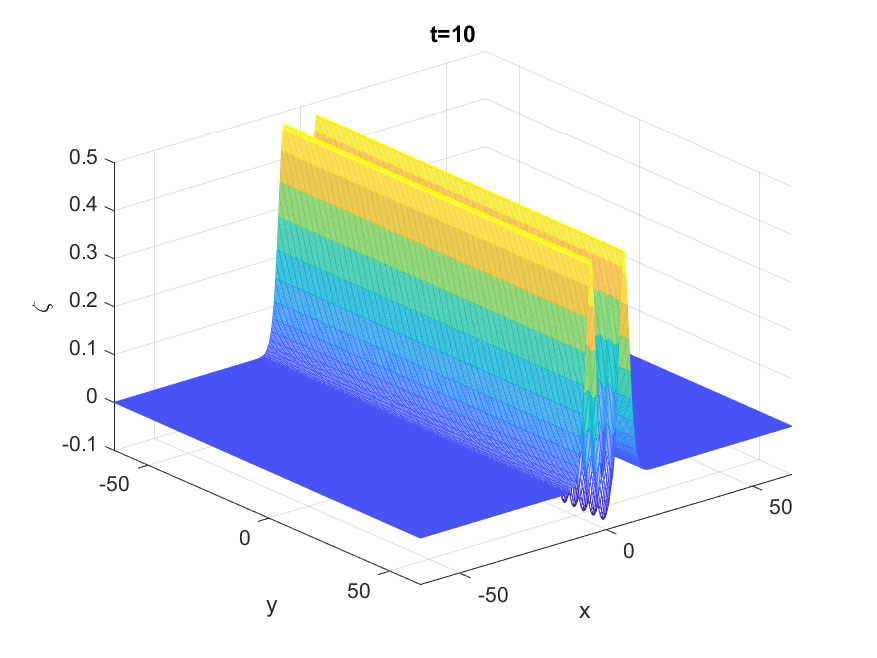}}
\subfigure
{\includegraphics[width=0.45\columnwidth]{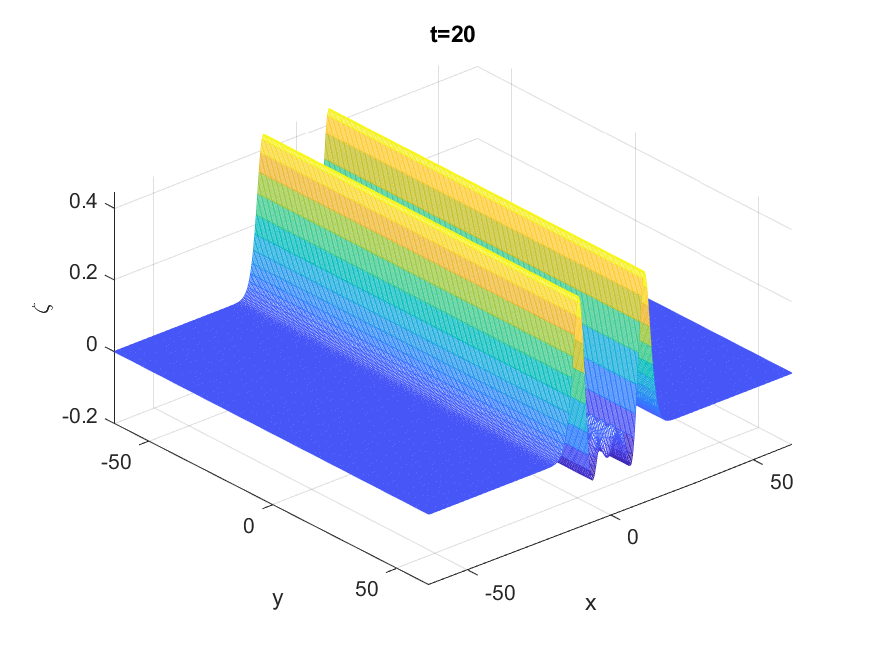}}
\subfigure
{\includegraphics[width=0.45\columnwidth]{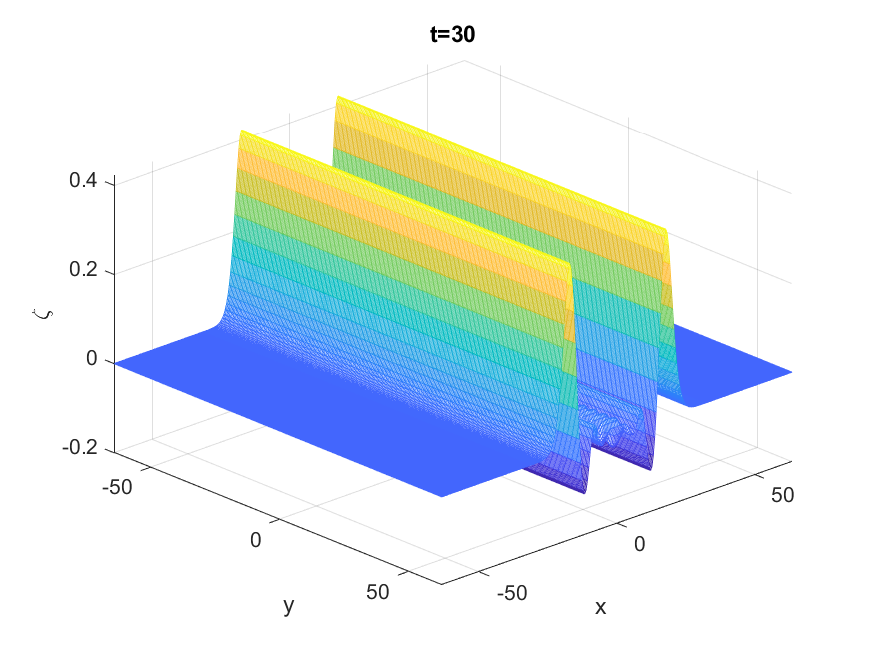}}
\subfigure
{\includegraphics[width=0.45\columnwidth]{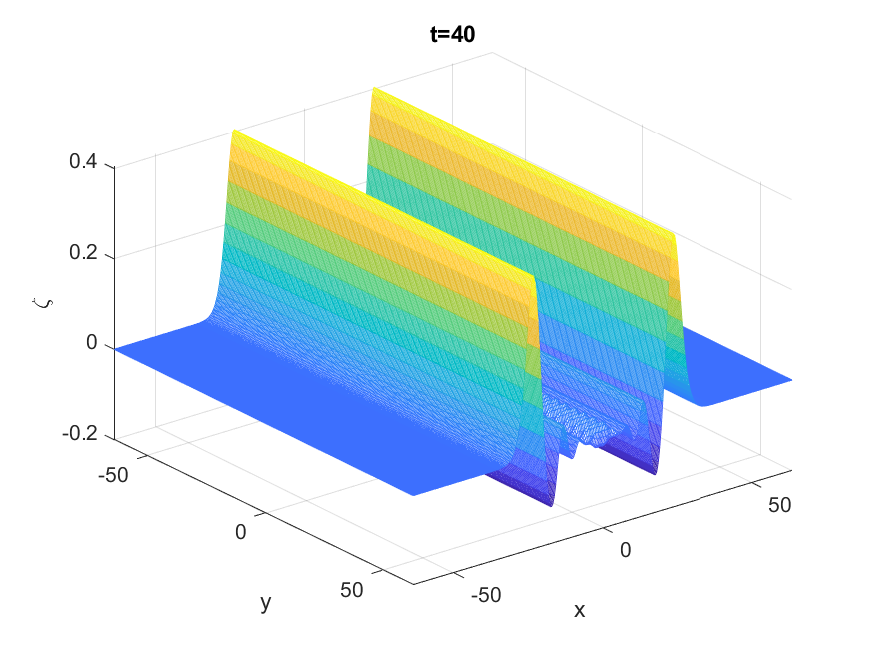}}
\caption{$\zeta$ component of the numerical solution from (\ref{BB46}) at times $t=10,20,30,40$.}
\label{BBFig7}
\end{figure}
\begin{figure}[htbp]
\centering
\subfigure
{\includegraphics[width=0.45\textwidth]{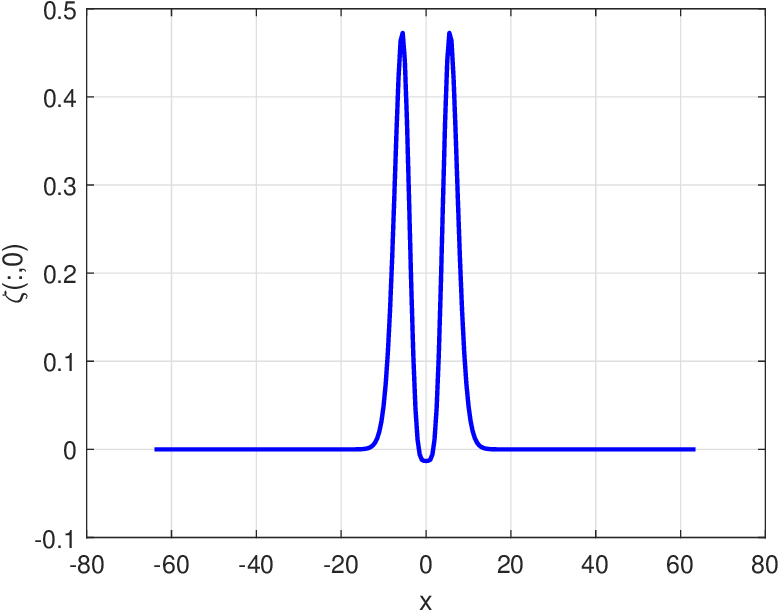}}
\subfigure
{\includegraphics[width=0.45\textwidth]{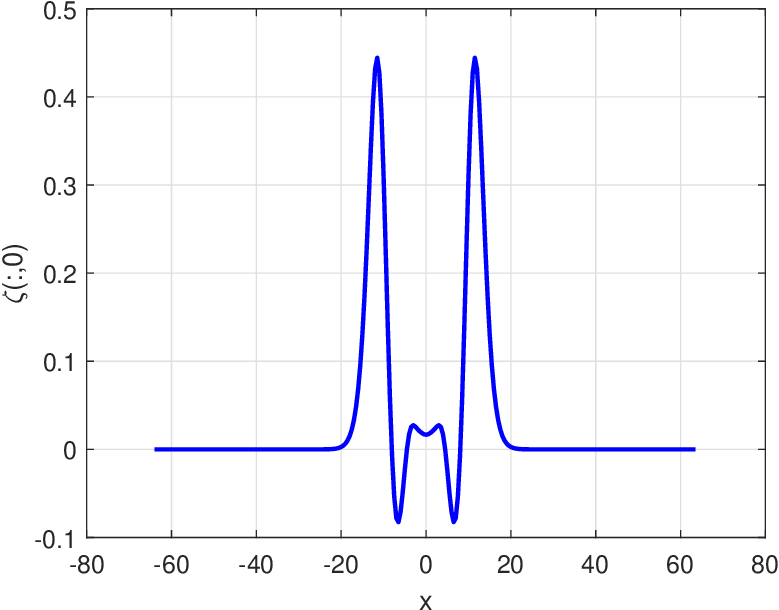}}
\subfigure
{\includegraphics[width=0.45\textwidth]{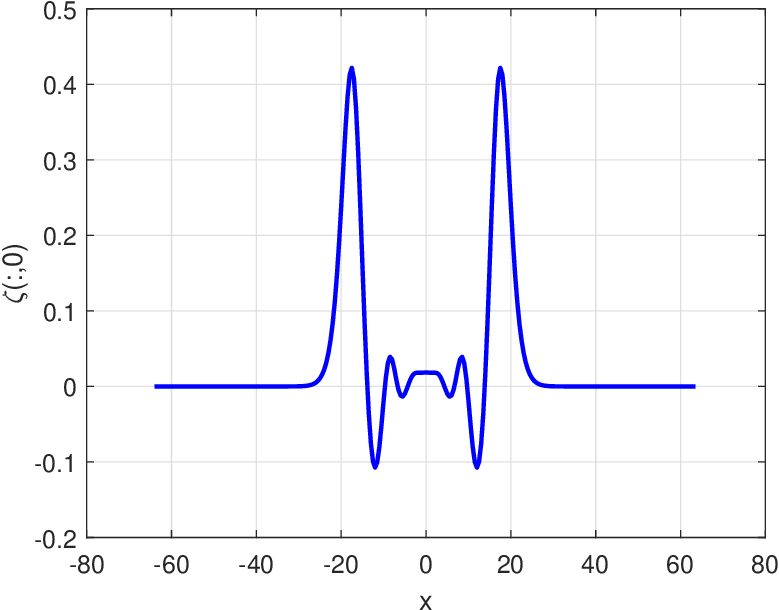}}
\subfigure
{\includegraphics[width=0.45\textwidth]{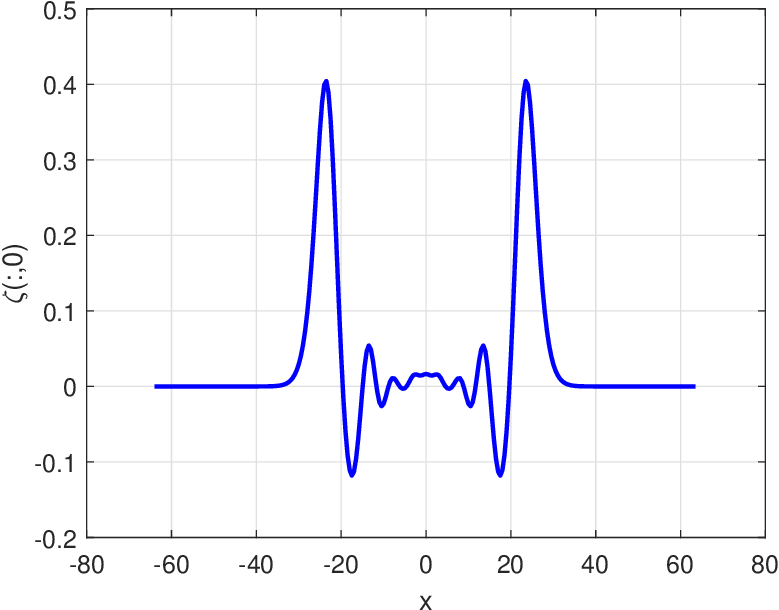}}
\caption{$\zeta$ component of the numerical solution from (\ref{BB46}) at times $t=10,20,30,40$ and $y=0$.}
\label{BBFig8}
\end{figure}

A final experiment is concerned with the generation of line solitary wave solutions. They are traveling wave solutions of the form 
$$\zeta(x,y,t)=\zeta(\xi),\quad {\bf v}(x,y,t)={\bf v}(\xi),$$ with $\xi=\alpha_{x}x+\alpha_{y}y-c_{s}t-r_{0}, r_{0}\in\mathbb{R}$, that is, traveling with permanent orm and constant speed $c_{s}$ along the direction of some ${\bf \alpha}=(\alpha_{x},\alpha_{y}), |{\bf \alpha}|^{2}=1$, and profiles $\zeta, {\bf v}$ that decay to zero as $|\xi|\rightarrow\infty$, cf. \cite{LS2023} and references therein. 

In \cite{DMS2007} a profile of the form
\begin{eqnarray}
\zeta_{0}(x,y)&=&(1+\delta\cos\frac{\pi y}{2})e^{-\frac{(x-x_{0})^{2}}{B}},\nonumber\\
v_{1}^{0}(x,y)&=&v_{2}^{0}(x,y)=0,\label{BB46}
\end{eqnarray}
was used to generate line solitary waves in the $x$-direction, with localized and decaying tranverse perturbations. Figure \ref{BBFig7} show the evolution in our case, taking $\delta=0.005, B=5, x_{0}=0$. Here the results suggest the formation of two waveforms traveling in the $x$-direction, to the right and to the left. Both seem to generate (cf. Figure \ref{BBFig8}) a line solitary wave plus small dispersive tails behind.

\end{document}